\def\l{\left}
\def\r{\right}
\def\bg{\bigg}
\def\({\bg(}
\def\){\bg)}
\def\t{\text}
\def\f{\frac}
\def\eq{\equiv}
\def\Z{\mathbb Z}
\def\C{\mathbb C}
\def\N{\mathbb N}
\def\Q{\mathbb Q}
\def\1{{\bf 1}}
\theoremstyle{plain}
\newtheorem{theorem}{Theorem}[section]
\newtheorem{lemma}{Lemma}
\newtheorem{conjecture}{Conjecture}
\theoremstyle{definition}
\theoremstyle{remark}
\newtheorem{remark}{Remark}
\def\<{\langle}
\def\>{\rangle}
\begin{document}
\hbox{}
\medskip

\title[$p$-adic analogues of hypergeometric identities]{$p$-adic analogues of hypergeometric identities\\and their applications}

\author{Chen Wang}
\address {(Chen Wang) Department of Mathematics, Nanjing
University, Nanjing 210093, People's Republic of China}
\email{cwang@smail.nju.edu.cn}

\author{Zhi-Wei Sun}
\address {(Zhi-Wei Sun) Department of Mathematics, Nanjing
University, Nanjing 210093, People's Republic of China}
\email{zwsun@nju.edu.cn}

\subjclass[2010]{Primary 33C20, 11B75; Secondary 11B65, 05A10, 33E50}
\keywords{Congruences, hypergeometric series, $p$-adic Gamma function, binomial coefficients}
\thanks{This work was supported by the National Natural Science Foundation of China (grant no. 11971222)}
\begin{abstract} In this paper, we confirm several conjectures posed by Sun recently. For example, we prove that for any odd prime $p$ we have
$$
\sum_{k=0}^{p-1}A_k\equiv\begin{cases}4x^2-2p\pmod{p^2}\quad&\text{if $p=x^2+2y^2\ (x,y\in\mathbb{Z})$},\\ 0\pmod{p^2}\quad&\text{if $p\equiv5,7\pmod{8}$},\end{cases}
$$
where $A_n:=\sum_{k=0}^n\binom{n+k}{k}^2\binom{n}{k}^2$ are the Ap\'{e}ry numbers.
\end{abstract}
\maketitle

\section{Introduction}
\setcounter{lemma}{0}
\setcounter{theorem}{0}
\setcounter{equation}{0}
\setcounter{conjecture}{0}
\setcounter{remark}{0}

For $n\in\N=\{0,1,2,\ldots\}$ the truncated hypergeometric series ${}_{r+1}F_r$ are defined by
$$
{}_{r+1}F_r\bigg[\begin{matrix}\alpha_0&\alpha_1&\cdots&\alpha_r\\ &\beta_1&\cdots&\beta_r\end{matrix}\bigg|\ z\bigg]_n:=\sum_{k=0}^{n}\f{(\alpha_0)_k\cdots(\alpha_r)_k}{(\beta_1)_k\cdots(\beta_r)_k}\cdot\f{z^k}{k!},
$$
where $\alpha_0,\ldots,\alpha_r,\beta_1,\ldots,\beta_r,z\in\C$ and
$$
(\alpha)_k:=
\begin{cases}
\displaystyle \prod_{j=0}^{k-1}(\alpha+j),\quad&\t{if $k\geq1$},\vspace{1mm}\\
\displaystyle 1,\quad&\t{if $k=0$},
\end{cases}
$$
denotes the so-called Pochhammer's symbol. Clearly, the truncated hypergeometric series is the sum of the first finite terms of the corresponding hypergeometric series. In the past decades, the arithmetic properties of the truncated hypergeometric series have been widely studied (cf. \cite{AhOn00,DFLST16,He15,He17a,He17b,Liu17,Liu19,Long11,LoRa16,MP,Mo04,Mo05,PTW,SunZW11a,SunZW11b,SunZW13,Swisher15,Ta12,vH,Wang}).

The well-known Ap\'{e}ry numbers given by
$$
A_n:=\sum_{k=0}^n\binom{n+k}{k}^2\binom{n}{k}^2=\sum_{k=0}^n\binom{n+k}{2k}^2\binom{2k}{k}^2\quad (n\in\N=\{0,1,\ldots\}),
$$
were first introduced by Ap\'{e}ry to prove the irrationality of $\zeta(3)=\sum_{n=1}^{\infty}1/n^3$ (see \cite{Apery,Sl}). In 2012, Sun \cite{Sunapery} studied the sums involving Ap\'{e}ry numbers systematically and posed some conjectures; for example, he conjectured that for any odd prime $p$ we have
\begin{equation}\label{apery1}
\sum_{k=0}^{p-1}A_k\equiv\begin{cases}4x^2-2p\pmod{p^2}\quad&\text{if $p=x^2+2y^2\ (x,y\in\mathbb{Z})$},\\ 0\pmod{p^2}\quad&\text{if $p\equiv5,7\pmod{8}$}.\end{cases}
\end{equation}
Note that the above conjecture was also collected in \cite[Conjecture 55]{SunZW19}. We now state our first theorem.
\begin{theorem}\label{aperyth}
For any odd prime $p$, \eqref{apery1} holds.
\end{theorem}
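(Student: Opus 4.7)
The plan is to reduce $\sum_{k=0}^{p-1}A_k$ modulo $p^2$ to the simpler single truncated hypergeometric sum
$$
S_p:=\sum_{k=0}^{p-1}\frac{\binom{2k}{k}^3}{64^k},
$$
whose value modulo $p^2$ is already known to coincide with the right-hand side of \eqref{apery1}: a classical congruence asserts that $S_p\equiv 4x^2-2p\pmod{p^2}$ whenever $p=x^2+2y^2$, and $S_p\equiv 0\pmod{p^2}$ whenever $p\equiv 5,7\pmod 8$. Thus the task becomes proving the single-modulus congruence
$$
\sum_{k=0}^{p-1}A_k\equiv S_p\pmod{p^2}.
$$

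To attack this, I would begin from the symmetric representation $A_n=\sum_{k=0}^n\binom{n+k}{2k}^2\binom{2k}{k}^2$ and interchange the order of summation:
$$
\sum_{n=0}^{p-1}A_n=\sum_{k=0}^{p-1}\binom{2k}{k}^2\sum_{j=0}^{p-1-k}\binom{2k+j}{2k}^2.
$$
The inner sum over $j$ is a truncated ${}_3F_2(1)$ with parameters shifted by $2k$. At this point I would invoke the $p$-adic hypergeometric transformation(s) established in the earlier sections of the paper, whose purpose is precisely to reshape such inner tails modulo $p^2$ via the Morita $p$-adic Gamma function $\Gamma_p$ and careful manipulation of Pochhammer symbols with half-integer shifts. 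The goal is to convert the inner sum into something proportional (modulo $p^2$) to $\binom{2k}{k}/64^k$ times an error that is either absorbable into $S_p$ or vanishes mod $p^2$, so that after recombining with the outer $\binom{2k}{k}^2$ we land on the desired expression $\binom{2k}{k}^3/64^k$.

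The hard part is this collapsing step. One must keep careful track of which contributions vanish modulo $p$ versus which survive modulo $p^2$, exploit the identity $(\tfrac12)_k=(2k)!/(4^k k!)$ together with the $p$-adic functional equations of $\Gamma_p$ to handle half-integer arguments, and use symmetries such as $k\mapsto p-1-k$ to convert tails of length $p-1-k$ into closed-form expressions. A likely issue is that the inner sum will produce not only the main term but also a secondary term of order $p$, so one must verify that the secondary contribution aligns correctly with known $p$-adic Gamma quotient identities to cancel cleanly. Once $\sum_{k=0}^{p-1}A_k\equiv S_p\pmod{p^2}$ is in hand, Theorem~\ref{aperyth} follows immediately by quoting the known evaluation of $S_p$ (which itself is derived from Gauss/Jacobi sum machinery and the theory of $p=x^2+2y^2$), with no further work required.
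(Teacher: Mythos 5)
Your reduction collapses onto the wrong classical sum, and this is fatal. The congruence you quote for $S_p=\sum_{k=0}^{p-1}\binom{2k}{k}^3/64^k$ is misstated: that sum is governed by the representation $p=x^2+y^2$ (it is $\equiv 4a^2-2p\pmod{p^2}$ for $p\equiv1\pmod4$ with $p=a^2+b^2$, $a$ odd, and $\equiv0\pmod{p^2}$ for $p\equiv3\pmod4$), not by $p=x^2+2y^2$. Consequently the intermediate claim $\sum_{k=0}^{p-1}A_k\equiv S_p\pmod{p^2}$ is simply false: for $p=5$ one has $\sum_{k=0}^{4}A_k=34525\equiv0\pmod{25}$ (consistent with $5\equiv5\pmod 8$), whereas $S_5\equiv 1+22+21\equiv-6\pmod{25}$ (consistent with $5=1^2+2^2$). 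The single sum that actually carries the $x^2+2y^2$ data, and the one the paper reduces to, is
$$
{}_3F_2\bigg[\begin{matrix}\frac12&\frac14&\frac34\\ &1&1\end{matrix}\bigg|\ 1\bigg]_{p-1}=\sum_{k=0}^{p-1}\frac{\binom{2k}{k}^2\binom{4k}{2k}}{256^k},
$$
whose mod $p^2$ evaluation as $4x^2-2p$ is due to Mortenson and Sun. So the very first step of your plan already aims at the wrong target.

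Beyond that, the ``collapsing step'' --- turning the inner tail $\sum_{j=0}^{p-1-k}\binom{2k+j}{2k}^2$ into a closed form modulo $p^2$ --- is the entire content of the proof and you leave it as a hope. The paper does this concretely: it linearizes the square via Guo's identity $\binom{k}{i}\binom{k+i}{i}\binom{k}{j}\binom{k+j}{j}=\sum_s\binom{s}{i}\binom{s}{j}\binom{i+j}{s}\binom{k}{s}\binom{k+s}{s}$, sums the outer index in closed form using $\sum_{k=s}^{n-1}\binom{k+s}{s}\binom{k}{s}=\frac{n}{2s+1}\binom{n+s}{s}\binom{n-1}{s}$, applies the identity $\sum_{s=j}^{2j}\binom{s}{j}^2\binom{2j}{s}\frac{(-1)^s}{2s+1}=\binom{2j}{j}^2/\big((4j+1)\binom{4j}{2j}\big)$ to reach $p\cdot{}_3F_2[\frac12,\frac12,\frac12;\frac34,\frac54\mid1]_{(p-1)/2}$, and then evaluates that by Watson's ${}_3F_2$ theorem together with the Gauss multiplication formula and the transfer to $\Gamma_p$. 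None of these ingredients is present in your sketch, and without them (or substitutes) the argument does not get off the ground. To repair the proposal you would need both to re-aim at $\sum_k\binom{2k}{k}^2\binom{4k}{2k}/256^k$ and to supply an actual mechanism for the reduction.
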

\begin{remark}
In \cite{Sunapery}, Sun proved that \eqref{apery1} holds modulo $p$.
\end{remark}

Recently, Sun \cite[Conjectures 35 and 36]{SunZW19} proposed a series of congruences involving the following polynomial in $x$:
$$
\sum_{k=0}^{n-1}\varepsilon^k(2k+1)^{2l-1}\sum_{j=0}^k\binom{-x}{j}^m\binom{x-1}{k-j}^m,
$$
where $\varepsilon\in\{\pm1\}$ and $n,l,m\in\Z^+=\{1,2,3,\ldots\}$. He conjectured that the polynomial
\begin{equation}\label{sunintval1}
\f{1}{n}\sum_{k=0}^{n-1}\varepsilon^k(2k+1)^{2l-1}\sum_{j=0}^k\binom{-x}{j}^m\binom{x-1}{k-j}^m
\end{equation}
is integer-valued, here we say a polynomial $P(x)\in\Q[x]$ is integer-valued if $P(m)\in\Z$ for all $m\in\Z$. If $\varepsilon=1$ and $m=2$, he even conjectured that the polynomial
\begin{equation}\label{sunintval2}
\f{(2l-1)!!}{n^2}\sum_{k=0}^{n-1}(2k+1)^{2l-1}\sum_{j=0}^k\binom{-x}{j}^2\binom{x-1}{k-j}^2
\end{equation}
is integer-valued. Sun also posed some conjectures when $n$ take prime value. Let us consider the case that $l=1,m\geq3$ and $n=p$ is an odd prime. Exchanging the summation order and replacing $k-j$ with $k$ we obtain
$$
\sum_{k=0}^{p-1}\varepsilon^k(2k+1)\sum_{j=0}^k\binom{-x}{j}^m\binom{x-1}{k-j}^m=\sum_{j=0}^{p-1}\varepsilon^j\binom{-x}{j}^m\sum_{k=0}^{p-1-j}\varepsilon^k(2k+2j+1)\binom{x-1}{k}^m.
$$
Denote by $\<-x\>_p$ the least nonnegative residue of $-x$ modulo $p$. Clearly,
$$\binom{-x}{j}\eq0\pmod{p}\quad\t{for}\quad j\in\{\<-x\>_p+1,\ldots,p-1\}$$
and
$$\binom{x-1}{k}\eq0\pmod{p}\quad\t{for}\quad k\in\{p-\<-x\>_p,\ldots,p-1\}.$$
Therefore by noting that $p-1-j\geq p-1-\<-x\>_p$ for any $j\in\{0,\ldots,a\}$ we have
\begin{equation}\label{sigma1sigma2}
\begin{aligned}
\sum_{k=0}^{p-1}\varepsilon^k(2k+1)\sum_{j=0}^k\binom{-x}{j}^m\binom{x-1}{k-j}^m\eq&\sum_{j=0}^{p-1}\varepsilon^j\binom{-x}{j}^m\sum_{k=0}^{p-1}\varepsilon^k(2k+2j+1)\binom{x-1}{k}^m\\
=&(1-x)\Sigma_1+x\Sigma_2\pmod{p^m},
\end{aligned}
\end{equation}
where
\begin{align*}
\Sigma_1:=&{}_{m+1}F_m\bigg[\begin{matrix}1-x&1+\f{1-x}{2}&1-x&\cdots&1-x\\ &\f{1-x}{2}&1&\cdots&1\end{matrix}\bigg|\ (-1)^m\varepsilon\bigg]_{p-1}\\
&\times {}_{m}F_{m-1}\bigg[\begin{matrix}x&x&\cdots&x\\ &1&\cdots&1\end{matrix}\bigg|\ (-1)^m\varepsilon\bigg]_{p-1}
\end{align*}
and
\begin{align*}
\Sigma_2:=&{}_{m+1}F_m\bigg[\begin{matrix}x&1+\f{x}{2}&x&\cdots&x\\ &\f{x}{2}&1&\cdots&1\end{matrix}\bigg|\ (-1)^m\varepsilon\bigg]_{p-1}\\
&\times {}_{m}F_{m-1}\bigg[\begin{matrix}1-x&1-x&\cdots&1-x\\ &1&\cdots&1\end{matrix}\bigg|\ (-1)^m\varepsilon\bigg]_{p-1}.
\end{align*}
In view of the above we only need to consider the congruences concerning the truncated ${}_{m+1}F_m$ and ${}_mF_{m-1}$ hypergeometric series. This is the motivation of the remaining part of this paper.

Our results involve the Morita's $p$-adic gamma function $\Gamma_p$ (cf. \cite{Robert00}) which is the $p$-adic analogue of the classic gamma function $\Gamma$. For $n\in\N$ define $\Gamma_p(0):=1$ and for $n\geq1$
$$
\Gamma_p(n):=(-1)^n\prod_{\substack{1\leq k<n\\ p\nmid k}}k.
$$
As we all know, the definition of $\Gamma_p$ can be extended to $\Z_p$ since $\N$ is a dense subset of $\Z_p$ with respect to $p$-adic norm, where $\Z_p$ denotes the ring of $p$-adic integers. It follows that
\begin{equation}\label{padicgamma1}
\f{\Gamma_p(x+1)}{\Gamma_p(x)}=\begin{cases}\displaystyle -x,\quad&\t{if $p\nmid x$},\vspace{2mm}\\ \displaystyle -1,\quad&\t{if $p\mid x$}.\end{cases}
\end{equation}
For more properties of $p$-adic gamma functions, one may consult \cite{LoRa16,MP,PTW,Robert00}.

We now state our second theorem.
\begin{theorem}\label{4F3analog}
For any odd prime $p$ and $\alpha\in\Z_p^{\times}=\{x\in\Z_p\ |\ p\nmid x\}$. Let $s=(\alpha+\<-\alpha\>_p)/p$ and
$$
h_p(\alpha)=\f{\Gamma_p\l(\f{1+\alpha}{2}\r)\Gamma_p\l(\f{1-3\alpha}{2}\r)}{\Gamma_p(1+\alpha)\Gamma_p(1-\alpha)\Gamma_p\l(\f{1-\alpha}{2}\r)^2}.
$$
Then the following congruence holds modulo $p^3$,
$$
{}_4F_3\bigg[\begin{matrix}\alpha&1+\f{\alpha}{2}&\alpha&\alpha\\ &\f{\alpha}{2}&1&1\end{matrix}\bigg|\ 1\bigg]_{p-1}\eq
\begin{cases}
\displaystyle 2h_p(\alpha),\quad & \t{if $\<-\alpha\>_p$ is odd and $\<-\alpha\>_p<\f{2p+1}{3}$,}\vspace{2mm}\\
\displaystyle (2-3s)ph_p(\alpha),\quad & \t{if $\<-\alpha\>_p$ is odd and $\<-\alpha\>_p\geq\f{2p+1}{3}$,}\vspace{2mm}\\
\displaystyle sph_p(\alpha),\quad & \t{if $\<-\alpha\>_p$ is even and $\<-\alpha\>_p<\f{p+1}{3}$,}\vspace{2mm}\\
\displaystyle\f{(s-3s^2)p^2h_p(\alpha)}{2},\quad & \t{if $\<-\alpha\>_p$ is even and $\<-\alpha\>_p\geq\f{p+1}{3}$.}
\end{cases}
$$
\end{theorem}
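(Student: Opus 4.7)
\emph{Plan of proof.} Set $a := \<-\al\>_p\in\{0,1,\ldots,p-1\}$ and $s := (\al+a)/p$, so that $\al = sp-a$; write the summand as
\[
T_k := \f{(2k+\al)(\al)_k^3}{\al\,(k!)^3}.
\]
For $a<k\ls p-1$ the Pochhammer $(\al)_k$ has the single $p$-divisible factor $\al+a = sp$, so $(\al)_k^3$ supplies a factor of $p^3$ and $v_p(T_k)\gs 3$. Since $1/\al$, $(2k+\al)$ and $1/(k!)^3$ are $p$-integral for $k\ls p-1$, each tail summand is $\eq 0\pmod{p^3}$, and the $(p-1)$-truncated ${}_4F_3$ reduces modulo $p^3$ to the ``head'' $\sum_{k=0}^{a}T_k$, plus a computable $p^3$-order correction obtained from a first-order expansion of $(sp+i)^3$.

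To evaluate the head I would exploit its very-well-poised structure. Writing $(1+\al/2)_k/(\al/2)_k = 1+2k/\al$ splits the head into a Dixon-type ${}_3F_2(1)$ sum governed by
\[
{}_3F_2\!\(\begin{matrix}\al,\,\al,\,\al\\1,\,1\end{matrix}\bigg|\,1\)=\f{\Ga(1+\al/2)\Ga(1-3\al/2)}{\Ga(1+\al)\Ga(1-\al)\Ga(1-\al/2)^2},
\]
and a companion sum $\sum k(\al)_k^3/(k!)^3$ which, after shifting $k\mapsto k+1$, is evaluable by a similar Dixon-type argument. Translating the finite analogues of these evaluations via \eqref{padicgamma1}, together with the Legendre-type duplication $\Ga_p(x/2)\Ga_p((x+1)/2)\sim\Ga_p(x)$ and the reflection $\Ga_p(x)\Ga_p(1-x)=\pm1$, collapses the resulting expression into an integer multiple of $h_p(\al)$.

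The four cases arise from two independent $p$-divisibility effects. First, the factor $2k+\al$ in $T_k$ vanishes modulo $p$ at $k=a/2$ (when $a$ is even) or $k=(a+p)/2$ (when $a$ is odd); only the former lies inside the head range $[0,a]$, so an even $a$ injects an extra factor $sp$ and produces the explicit power of $p$ in those cases. Second, the factor $\Ga_p((1-3\al)/2)$ in $h_p(\al)$ itself absorbs extra $p$'s precisely when $3a$ crosses the thresholds $p$ and $2p$, that is at $a\gs(p+1)/3$ and $a\gs(2p+1)/3$; each crossing applies \eqref{padicgamma1} at an exceptional argument and shifts the $p$-adic order of $h_p(\al)$ by one. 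Combining the two effects reproduces the four leading coefficients $2$, $(2-3s)p$, $sp$, $(s-3s^2)p^2/2$ together with the four range conditions.

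The main obstacle will be the fourth case, where $h_p(\al)$ already hides a factor of $p^2$ inside $\Ga_p((1-3\al)/2)$, so the congruence modulo $p^3$ effectively demands $p^5$-precision in the intermediate Dixon-type evaluation. This forces second-order Taylor expansions of $\Ga_p$ at critical integer arguments, together with a delicate balance between the head evaluation and the tail's $p^3$-order correction, which is no longer negligible when $h_p$ is $p^2$-suppressed. Pinning down the precise rational coefficient $(s-3s^2)/2$, rather than just its $p$-adic order, is the technical heart of the argument.
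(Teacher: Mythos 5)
Your reduction of the truncated ${}_4F_3$ to the head $\sum_{k=0}^{a}T_k$ modulo $p^3$ is sound (for $k>a$ the product $(\al)_k$ contains the factor $\al+a=sp$ exactly once, so $(\al)_k^3$ contributes $p^3$ while $1/(k!)^3$, $1/\al$ and $2k+\al$ stay $p$-integral), but the argument after that has genuine gaps. The central step --- evaluating the head to precision $p^3$ --- is essentially the whole theorem, and your sketch does not supply it: Dixon's theorem and the very-well-poised ${}_4F_3(1)$ summation are identities for the \emph{complete} series, whereas the head is a truncation at $k=a$ of a non-terminating series. A ``finite analogue with $p$-adic error control'' amounts to a second-order perturbation in $s$ around the terminating case $\al=-a$, and the second-order (harmonic-sum) corrections are exactly what you never address. (Also, your companion sum $\sum k(\al)_k^3/(k!)^3$ becomes, after the shift $k\mapsto k+1$, a multiple of ${}_3F_2[\al+1,\al+1,\al+1;2,2\,|\,1]$, which is not well-poised and not Dixon-summable, so even the proposed decomposition does not go through; the well-poised object is the full combination $\sum(2k+\al)(\al)_k^3/(\al(k!)^3)$ itself.) The paper sidesteps all of this with the local-global theorem of Pan--Tauraso--Wang: it perturbs by three parameters $x,y,z$, proves the identity \emph{exactly} on each hyperplane where one parameter vanishes --- there the truncated sum coincides with a terminating classical ${}_4F_3$ and Lemma 3.1 applies on the nose, after which classical $\Gamma$-ratios are converted to $\Gamma_p$-ratios --- and then invokes the local-global theorem for $p\gs7$, checking $p=3,5$ numerically.

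Second, your mechanism for the four cases rests on a false premise: $\Gamma_p\l(\f{1-3\al}{2}\r)$ cannot ``absorb extra $p$'s,'' because $\Gamma_p$ takes values in $\Z_p^{\times}$ for every argument in $\Z_p$; hence $h_p(\al)$ is always a unit. The powers of $p$ and the thresholds $(p+1)/3$ and $(2p+1)/3$ actually arise in the conversion of classical $\Gamma$-ratios to $\Gamma_p$-ratios: for instance $\Gamma\l(\f{1+3a-2sp-2tp}{2}\r)/\Gamma(1+a-sp-tp)=\prod_{j=1}^{(a-1)/2}(a+j-sp-tp)$, and this product acquires a $p$-divisible factor precisely when $(3a-1)/2\gs p$, i.e. $a\gs\f{2p+1}{3}$; it is these explicit factors, linear in $s$ and $t$, that generate the prefactors $(2-3s)p$, $sp$ and $(s-3s^2)p^2/2$. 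For the same reason your concern that the fourth case ``demands $p^5$-precision'' is spurious: the congruence is modulo $p^3$ throughout, $h_p(\al)$ carries no valuation, and the $p^2$ on the right-hand side is an explicit prefactor, so precision $p^3$ in the head suffices in every case. Your correct observation about the zero of $2k+\al$ at $k=a/2$ when $a$ is even does capture one of the two $p$-divisibility effects, but without a correct account of the second effect and without a usable finite summation formula the proposal does not constitute a proof.
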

\begin{remark}
In 2017, He \cite{He17b} studied the congruences modulo $p^2$ for primes $p\geq5$ and $\alpha=1/2,1/3,1/4$.
\end{remark}

In \cite{MP}, Mao and Pan obtained a number of congruences modulo $p^2$ involving truncated hypergeometric identities and $p$-adic gamma functions. For instance, as a corollary, they proved that for any odd prime $p$ and $\alpha,\beta\in\Z_p$ with $\<-\alpha\>_p\leq\<-\beta\>_p\leq(p+\<-\alpha\>_p-1)/2$ and $(\alpha-\beta+1)_{p-1}\not\eq0\pmod{p^2}$
$$
{}_4F_3\bigg[\begin{matrix}\alpha&1+\f{\alpha}{2}&\alpha&\beta\\ &\f{\alpha}{2}&1&\alpha-\beta+1\end{matrix}\bigg|-1\bigg]_{p-1}\eq-(\alpha+\<-\alpha\>_p)\cdot\f{\Gamma_p(\alpha-\beta+1)}{\Gamma_p(1+\alpha)\Gamma_p(1-\beta)}\pmod{p^2}.
$$
Letting $\beta=\alpha$ in the above congruence we get that
\begin{equation}\label{4F3(-1)modp2}
{}_4F_3\bigg[\begin{matrix}\alpha&1+\f{\alpha}{2}&\alpha&\alpha\\ &\f{\alpha}{2}&1&1\end{matrix}\bigg|-1\bigg]_{p-1}\eq\f{\alpha+\<-\alpha\>_p}{\Gamma_p(1+\alpha)\Gamma_p(1-\alpha)}\pmod{p^2}.
\end{equation}

Our next theorem says that \eqref{4F3(-1)modp2} is also valid for modulo $p^3$.
\begin{theorem}\label{4F3(-1)analog}
Let $p$ be an odd prime and $\alpha\in\Z_p^{\times}$. Then we have
$$
{}_4F_3\bigg[\begin{matrix}\alpha&1+\f{\alpha}{2}&\alpha&\alpha\\ &\f{\alpha}{2}&1&1\end{matrix}\bigg|-1\bigg]_{p-1}\eq\f{\alpha+\<-\alpha\>_p}{\Gamma_p(1+\alpha)\Gamma_p(1-\alpha)}\pmod{p^3}.
$$
\end{theorem}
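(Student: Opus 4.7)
Let $a = \<-\alpha\>_p$ and set $s := (\alpha+a)/p \in \Z_p$, so that $\alpha = -a+sp$ and the right-hand side of the claim equals $sp/(\Gamma_p(1+\alpha)\Gamma_p(1-\alpha))$. My plan reduces the $(p-1)$-term sum to a short head of length $a+1$, then compares both sides by Taylor expansion in $\alpha+a = sp$ up to order $p^2$.

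\textbf{Reduction and the classical identity.} For every $k \geq a+1$ the Pochhammer $(\alpha)_k$ contains the factor $\alpha+a=sp$, so $(\alpha)_k^3$ is divisible by $p^3$; since $(k!)^3 \in \Z_p^{\times}$ for $k \leq p-1$ and $(\alpha+2k)/\alpha \in \Z_p$, the tail $\sum_{k=a+1}^{p-1}$ contributes $0$ mod $p^3$. It therefore suffices to control the polynomial
$$
H(\alpha) := \sum_{k=0}^{a} \f{\alpha+2k}{\alpha}(-1)^k \f{(\alpha)_k^3}{(k!)^3}
$$
to order $(\alpha+a)^2$ at $\alpha = -a$. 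For $k \geq a+1$ the summand $f_k(\alpha) := (\alpha+2k)(-1)^k(\alpha)_k^3/(k!)^3$ has a triple zero at $\alpha=-a$, so $\alpha H(\alpha)$ agrees with the classical infinite sum $\sum_{k=0}^{\infty} f_k(\alpha)$ modulo $O((\alpha+a)^3)$ in a neighbourhood of $\alpha=-a$. The Bailey ${}_4F_3$ summation at $-1$, specialised at $b=c=\alpha$, yields (where it converges)
$$
\sum_{k=0}^{\infty}(\alpha+2k)(-1)^k\f{(\alpha)_k^3}{(k!)^3} = \f{\sin(\pi\alpha)}{\pi} = (-1)^a(\alpha+a) + O((\alpha+a)^3),
$$
the vanishing of the $(\alpha+a)^2$-coefficient being the parity of $\sin(\pi y)$ at $y=0$. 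Dividing by $\alpha=-a+(\alpha+a)$ yields
$$
H(\alpha) \equiv \f{(-1)^{a-1}}{a}(\alpha+a) + \f{(-1)^{a-1}}{a^2}(\alpha+a)^2 \pmod{(\alpha+a)^3}.
$$

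\textbf{$p$-adic comparison and main obstacle.} The explicit $p$-adic values $\Gamma_p(1+a) = (-1)^{a+1}a!$ and $\Gamma_p(1-a) = 1/(a-1)!$ (valid for $1 \leq a \leq p-1$) give $\Gamma_p(1+a)\Gamma_p(1-a) = (-1)^{a+1}a$; iterating the digamma recursion $\psi_p(x+1) - \psi_p(x) = 1/x$ for $p\nmid x$, together with the neutral step $\psi_p(1)-\psi_p(0) = 0$ at $x=0$, between $x=1-a$ and $x=1+a$ yields $\psi_p(1+a) - \psi_p(1-a) = 1/a$. Taylor-expanding the right-hand side of the theorem around $\alpha=-a$ then gives
$$
\f{\alpha+a}{\Gamma_p(1+\alpha)\Gamma_p(1-\alpha)} \equiv \f{(-1)^{a-1}}{a}(\alpha+a) + \f{(-1)^{a-1}}{a^2}(\alpha+a)^2 \pmod{(\alpha+a)^3},
$$
which matches $H(\alpha)$ exactly, and the theorem follows since $(\alpha+a)^3 = s^3p^3 \equiv 0 \pmod{p^3}$. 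The delicate step is the transfer of rational Taylor coefficients from Bailey's (analytic) identity to the $p$-adic polynomial $\alpha H(\alpha)$: although the coefficients are rational numbers and so make sense $p$-adically for $a \leq p-1$, establishing the requisite $(\alpha+a)^3$-vanishing of the analytic tail $\sum_{k\geq a+1}f_k(\alpha)$ needs care. The key feature beyond Mao--Pan's mod-$p^2$ result is exploiting the vanishing of the $(\alpha+a)^2$-coefficient of $\sin(\pi\alpha)/\pi$, which upgrades the accuracy from $p^2$ to $p^3$.
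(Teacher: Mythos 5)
Your argument is correct in substance but takes a genuinely different route from the paper. The paper perturbs the three upper parameters independently: it defines $\Omega(x,y,z)$ as the difference between the truncated ${}_4F_3$ at $(-a+x,-a+y,-a+z)$ and the conjectural $p$-adic Gamma expression, shows that $\Omega$ vanishes identically on the three hyperplanes $x=0$, $y=0$, $z=0$ (where the truncated series coincides with Whipple's convergent ${}_4F_3(-1)$ sum and the classical Gamma quotients convert termwise into $p$-adic ones via \eqref{padicgamma1}), and then invokes the Pan--Tauraso--Wang local--global theorem to pass to the diagonal $x=y=z=sp$ modulo $p^3$ for $p\geq 7$, checking $p=3,5$ directly. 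You instead work in one variable: after discarding the tail $k\geq a+1$ (each term lies in $(sp)^3\Z_p$), you read off the degree-$0,1,2$ Taylor coefficients of the $p$-integral polynomial $\alpha H(\alpha)$ at $\alpha=-a$ from the same classical summation, the key point being that $\sin(\pi\alpha)/\pi=(-1)^a\sin(\pi(\alpha+a))/\pi$ is odd in $\alpha+a$, which kills the quadratic coefficient and upgrades the accuracy from $p^2$ to $p^3$. This buys a uniform, self-contained treatment of all odd $p$ with no local--global machinery and no small-prime verification; the paper's method is more mechanical and scales to the other multi-parameter congruences it treats. The step you flag is indeed the only real gap, but it is routine to close: writing $f_k(\alpha)=(\alpha+a)^3g_k(\alpha)$ with $g_k(\alpha)=(\alpha+2k)(-1)^k\bigl(\prod_{0\leq j\leq k-1,\ j\neq a}(\alpha+j)\bigr)^3/(k!)^3$, one checks that $|g_k(\alpha)|$ is $O(k^{-3a-1/2})$ uniformly for $\alpha$ in a small complex neighbourhood of $-a\leq -1$, so $\sum_{k\geq a+1}g_k$ converges uniformly to an analytic function and the analytic tail vanishes to order at least $3$ at $\alpha=-a$; since the degree-$\leq 2$ coefficients of $\alpha H(\alpha)$ are rational and $p$-integral, the resulting identities transfer to $\Z_p$. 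One simplification you missed: the digamma computation is unnecessary, because \eqref{padicgamma1} and \eqref{eulerpadic} give the exact identity $\Gamma_p(1+\alpha)\Gamma_p(1-\alpha)=\alpha\,\Gamma_p(1+\alpha)\Gamma_p(-\alpha)=(-1)^{a}\alpha$, so the right-hand side of the theorem is exactly $(-1)^a sp/\alpha$ and the whole statement reduces to $\alpha\cdot{}_4F_3[\cdots]\eq(-1)^a sp\pmod{p^3}$, which is precisely what your expansion delivers.
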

\begin{remark}
Note that the cases $\alpha=1/d$ and $p\eq1\pmod{d}$ with $d=2,3,4$ were first conjectured by van Hamme \cite{vH} and confirmed by Swisher \cite{Swisher15}.
\end{remark}

The proofs of Theorems \ref{4F3analog} and \ref{4F3(-1)analog} depend on the local-global theorem for $p$-adic supercongruences established by Pan, Tauraso and Wang \cite{PTW}. Here we illustrate the local-global theorem briefly (the reader may refer to \cite[Theorem 1.1]{PTW} for details). For any prime $p>\binom{r+1}{2}$ the local-global theorem says that if a congruence modulo $p^r$ holds over some $r$ admissible hyperplanes of $\Z_p^n$, then it also holds over the whole $\Z_p^n$. In view of this, to show our theorems, we only need to prove them `locally'.

The organization of this paper is as follows. In the next section, we shall prove Theorem \ref{aperyth}. We will prove Theorems \ref{4F3analog} and \ref{4F3(-1)analog} in Section 3. In Section 4, we shall confirm some conjectures of Sun in \cite[Conjectures 35 and 36]{SunZW19} as applications of Theorems \ref{4F3analog} and \ref{4F3(-1)analog}. In the last section, we will prove more conjectures of Z.-W. Sun by some known results.

\medskip
\section{Proof of Theorem \ref{aperyth}}
\setcounter{lemma}{0}
\setcounter{theorem}{0}
\setcounter{equation}{0}
\setcounter{conjecture}{0}
\setcounter{remark}{0}
In order to show Theorem \ref{aperyth} we need the following lemmas.
\begin{lemma}\cite[(2.5)]{Guo}\label{guoid}
We have the following identity
$$
\binom{k}{i}\binom{k+i}{i}\binom{k}{j}\binom{k+j}{j}=\sum_{s=\max\{i,j\}}^{i+j}\binom{s}{i}\binom{s}{j}\binom{i+j}{s}\binom{k}{s}\binom{k+s}{s}.
$$
\end{lemma}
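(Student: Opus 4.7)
The identity is symmetric in $i$ and $j$, so assume $i\ls j$. My plan is to recognize the common building block
$$
U_s(k):=\binom{k}{s}\binom{k+s}{s}
$$
appearing on both sides, and to view the identity as the expansion of the polynomial product $U_i(k)U_j(k)$ in the basis $\{U_s(k)\}_{s\gs 0}$ of $\Q[k]$. Since $U_s(k)$ is a polynomial in $k$ of degree exactly $2s$, the degrees are pairwise distinct, so $\{U_s\}_{s\gs0}$ is a $\Q$-basis of $\Q[k]$ and the expansion coefficients are uniquely determined.

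The first step would be to exploit the vanishing $U_s(r)=0$ for all integers $r$ with $0\ls r<s$ (coming from the factor $\binom{r}{s}$), together with $U_s(s)=\binom{2s}{s}\ne 0$. Writing $U_i(k)U_j(k)=\sum_{s\gs 0}c_s U_s(k)$ and evaluating at $k=0,1,2,\ldots$ yields a triangular linear system for the coefficients $c_s$. The vanishing $U_i(r)U_j(r)=0$ for $r<j$ forces $c_s=0$ for $s<j=\max(i,j)$, matching the claimed lower limit of summation; a degree count ($\deg_k U_iU_j = 2(i+j)$ versus $\deg_k U_s = 2s$) forces $c_s=0$ for $s>i+j$, matching the upper limit.

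The second step would be to extract $c_s$ for $j\ls s\ls i+j$ by direct peeling. At $k=j$ we get $c_j = U_i(j)U_j(j)/U_j(j) = \binom{j}{i}\binom{i+j}{i}$, which agrees with the conjectured value $\binom{j}{i}\binom{j}{j}\binom{i+j}{j}$ via the symmetry $\binom{i+j}{i}=\binom{i+j}{j}$. Continuing inductively, each $c_s$ appears as an alternating binomial sum, which the third step would collapse to the claimed compact product $\binom{s}{i}\binom{s}{j}\binom{i+j}{s}$ by repeated Chu--Vandermonde convolution.

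The main obstacle is the final hypergeometric collapse in step three: the telescoping is not self-evident by hand and requires the right order of summation. As a computational safety net, one can instead apply Zeilberger's algorithm to the difference of the two sides as a hypergeometric term in $s$ (with $i,j,k$ treated as free parameters); this yields a WZ certificate that reduces the whole identity to a single base case checkable by direct substitution. Since the lemma is attributed to Guo, either route vindicates the bibliographic citation and can be referred to for the remaining paper.
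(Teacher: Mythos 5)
You should first note that the paper contains no argument for this lemma at all: it is quoted verbatim from Guo \cite{Guo}, equation (2.5), so the only ``proof'' in the paper is the citation. Judged on its own terms, the structural part of your proposal is correct and does genuine work: $U_s(k)=\binom{k}{s}\binom{k+s}{s}$ has degree $2s$ in $k$, so $\{U_s\}_{s\ge 0}$ is a graded basis of $\Q[k]$ and the expansion $U_iU_j=\sum_s c_sU_s$ is unique; the vanishing $U_s(r)=0$ for integers $0\le r<s$ together with $U_s(s)=\binom{2s}{s}\neq 0$ makes evaluation at $k=0,1,2,\dots$ triangular; and your conclusions $c_s=0$ for $s<\max\{i,j\}$, $c_s=0$ for $s>i+j$ (by degree), and $c_{\max\{i,j\}}=\binom{j}{i}\binom{i+j}{i}$ are all right.

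The genuine gap is your step three, and it is the entire content of the lemma. The peeling recursion reads $c_s\binom{2s}{s}=U_i(s)U_j(s)-\sum_{t<s}c_tU_t(s)$, and after substituting the conjectured values $c_t=\binom{t}{i}\binom{t}{j}\binom{i+j}{t}$, what must be verified at stage $s$ is precisely $\sum_t\binom{t}{i}\binom{t}{j}\binom{i+j}{t}U_t(s)=U_i(s)U_j(s)$ --- that is, the original identity specialized to $k=s$. So the induction does not produce independent ``alternating binomial sums'' that are simpler than the target; it only reduces the three-parameter identity to its instances at $k=\max\{i,j\},\dots,i+j$, each of which is a nontrivial terminating hypergeometric evaluation of Saalsch\"utz type rather than an evident ``repeated Chu--Vandermonde'' telescoping, and you exhibit no such collapse. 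The Zeilberger/WZ fallback is sound in principle but is a promissory note as written (and slightly garbled: the difference of the two sides is not a hypergeometric term in $s$; one applies creative telescoping to the summand to obtain a recurrence in $k$ or $i$ and then matches initial conditions, of which there may be more than one). To actually close the argument you must either produce and check the WZ certificate, carry out the Saalsch\"utz-type summation for general $s$ --- for instance by first proving the classical linearization $\binom{x}{i}\binom{x}{j}=\sum_s\binom{s}{i}\binom{i}{s-j}\binom{x}{s}$ and noting $\binom{s}{i}\binom{i}{s-j}\binom{i+j}{i}=\binom{s}{i}\binom{s}{j}\binom{i+j}{s}$, then adapting the computation to the basis $\binom{k+s}{2s}=U_s(k)/\binom{2s}{s}$ --- or simply do what the paper does and cite Guo's proof.
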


The following identity can be verified by induction on $j$.
\begin{lemma}\label{sj2j}For $j\in\N$ we have
$$
\sum_{s=j}^{2j}\binom{s}{j}^2\binom{2j}{s}\f{(-1)^s}{2s+1}=\f{\binom{2j}{j}^2}{(4j+1)\binom{4j}{2j}}.
$$
\end{lemma}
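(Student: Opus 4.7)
Following the author's hint, the plan is induction on $j$. The base case $j=0$ is immediate: both sides equal $1$. Denote the left- and right-hand sides by $L(j)$ and $R(j)$. A direct computation gives
$$\frac{R(j+1)}{R(j)}=\frac{2(2j+1)^3}{(j+1)(4j+3)(4j+5)},$$
so the induction will close once the same recurrence is established for $L(j)$, namely
$$(j+1)(4j+3)(4j+5)\,L(j+1)=2(2j+1)^3\,L(j).$$

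To obtain this recurrence I would apply Zeilberger-style creative telescoping. Writing $F(j,s):=\binom{s}{j}^2\binom{2j}{s}(-1)^s/(2s+1)$, one seeks a rational-function multiple $G(j,s)=R(j,s)\,F(j,s)$ satisfying
$$(j+1)(4j+3)(4j+5)\,F(j+1,s)-2(2j+1)^3\,F(j,s)=G(j,s+1)-G(j,s)$$
identically in $s$. Summing over $s$, the right-hand side telescopes to zero because $F(j,\cdot)$ is supported on $\{j,\ldots,2j\}$, which yields the desired recurrence for $L(j)$.

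A conceptually cleaner, non-inductive alternative is to recognise $L(j)$ as a Saalsch\"utzian ${}_3F_2$. After the reindexing $t=s-j$ and standard Pochhammer manipulations (using $\binom{t+j}{j}=(j+1)_t/t!$, $\binom{2j}{j-t}=\binom{2j}{j}(-j)_t(-1)^t/(j+1)_t$, and $(2t+2j+1)^{-1}=(j+\tfrac12)_t/[(2j+1)(j+\tfrac32)_t]$), one finds
$$L(j)=\frac{(-1)^j\binom{2j}{j}}{2j+1}\cdot{}_3F_2\!\left[\begin{matrix}-j,& j+1,& j+\tfrac12\\ &1,& j+\tfrac32\end{matrix};\,1\right],$$
and since $1+(-j)+(j+1)+(j+\tfrac12)=1+(j+\tfrac32)$, the Pfaff--Saalsch\"utz summation applies and, after simplifying the resulting Pochhammer product, returns precisely $R(j)$. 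The only real obstacle in either route is bookkeeping — finding the telescoping certificate on the one hand, or reducing Pochhammer symbols on the other — but neither involves any conceptual difficulty.
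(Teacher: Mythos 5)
Your proposal is correct. The paper offers no actual argument for this lemma beyond the one-line remark that the identity ``can be verified by induction on $j$,'' so your first route (induction on $j$, closed by a Zeilberger-type recurrence) is essentially the paper's stated approach fleshed out; note, however, that as written it is still only a plan, since you never exhibit the rational certificate $R(j,s)$, and without it the recurrence $(j+1)(4j+3)(4j+5)L(j+1)=2(2j+1)^3L(j)$ is not yet proved (your ratio $R(j+1)/R(j)$ is computed correctly, though). Your second route is genuinely different from the paper's and, unlike the first, is complete up to routine verification: I confirm that the reindexing $t=s-j$ with the stated Pochhammer conversions gives $L(j)=\frac{(-1)^j\binom{2j}{j}}{2j+1}\cdot{}_3F_2\bigl[-j,\,j+1,\,j+\tfrac12;\;1,\,j+\tfrac32;\;1\bigr]$, that this series is terminating and Saalsch\"{u}tzian, and that Pfaff--Saalsch\"{u}tz with $(n,a,b,c)=(j,\,j+1,\,j+\tfrac12,\,1)$ yields $\frac{(-j)_j(\frac12-j)_j}{(1)_j(-2j-\frac12)_j}=\frac{(-1)^j(2j+1)\binom{2j}{j}}{(4j+1)\binom{4j}{2j}}$, which multiplied by the prefactor returns exactly $\binom{2j}{j}^2\big/\bigl((4j+1)\binom{4j}{2j}\bigr)$. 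The Saalsch\"{u}tz route is the one worth keeping: it replaces an unexhibited telescoping certificate (or the paper's unexplained induction) by an appeal to a classical closed-form summation and is self-contained.
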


\begin{lemma}\cite[Theorem 3.5.5]{AAR99}\label{3F2trans}
If $a+b=1$ and $e+f=2c+1$, then we have
$$
{}_3F_2\bigg[\begin{matrix}a&b&c\\&e&f\end{matrix}\bigg|\ 1\bigg]=\f{\pi\Gamma(e)\Gamma(f)}{2^{2c-1}\Gamma((a+e)/2)\Gamma((a+f)/2)\Gamma((b+e)/2)\Gamma((b+f)/2)}.
$$
\end{lemma}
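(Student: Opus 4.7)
The plan is to establish the classical Whipple evaluation that Lemma \ref{3F2trans} represents: a ${}_3F_2$ at $1$ whose numerator parameters are well-poised ($a+b=1$) while the denominator parameters satisfy Whipple's balance $e+f=2c+1$. I would derive it by combining an Euler-type integral representation with a quadratic transformation of the inner ${}_2F_1$.

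First, under the temporary assumption $\Re(c)>0$ and $\Re(c+1-e)>0$ (the remaining cases following by analytic continuation in the parameters), I would rewrite the left-hand side as
$$
\f{\Gamma(2c+1-e)}{\Gamma(c)\Gamma(c+1-e)}\int_0^1 t^{c-1}(1-t)^{c-e}\,{}_2F_1(a,1-a;e;t)\,dt,
$$
which is the standard Euler integral for a ${}_3F_2$ at argument $1$ after substituting $f=2c+1-e$ and $b=1-a$. The task is thereby reduced to evaluating this single definite integral.

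The crucial step is to apply the quadratic transformation of ${}_2F_1(a,1-a;e;t)$, available precisely because the numerator parameters sum to $1$, to rewrite it as a power series in $4t(1-t)$ (equivalently, after $t=\sin^2\theta$, in $\sin^2 2\theta$). Interchanging sum and integral makes each term a beta integral, and the resulting single series is summable in closed form thanks to the balance $e+f=2c+1$. Applying the Legendre duplication formula $\Gamma(2z)=2^{2z-1}\pi^{-1/2}\Gamma(z)\Gamma(z+1/2)$ then consolidates the gamma factors into the four symmetric values $\Gamma((a+e)/2)$, $\Gamma((a+f)/2)$, $\Gamma((b+e)/2)$, $\Gamma((b+f)/2)$ on the right-hand side, together with the prefactor $2^{1-2c}\pi$.

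The main obstacle is bookkeeping: matching the quadratic ${}_2F_1$ transformation to the symmetric integrand $t^{c-1}(1-t)^{c-e}$ so that the Pochhammer factors of the outer series cancel cleanly against the beta ratios produced by integration, and then invoking duplication carefully to assemble everything into the stated symmetric form. Because Lemma \ref{3F2trans} is a textbook classic (it is exactly \cite[Theorem~3.5.5]{AAR99}), in the paper it is invoked by citation rather than reproved; the outline above is the route I would take to reproduce it from first principles.
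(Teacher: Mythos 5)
The paper does not prove this lemma: it is Whipple's classical $_3F_2(1)$ summation, imported verbatim as \cite[Theorem 3.5.5]{AAR99}, where it is obtained from Dougall's/Dixon's theorems via transformations of $_3F_2(1)$ series (Whipple's own 1926 derivation is of the same nature). So the only question is whether your integral-based outline would actually reproduce it, and as written it would not, for two reasons. The fatal one is the step ``rewrite ${}_2F_1(a,1-a;e;t)$ as a power series in $4t(1-t)$ and interchange sum and integral.'' The quadratic transformation ${}_2F_1(a,1-a;e;t)=(1-t)^{e-1}\,{}_2F_1\big(\tfrac{e-a}{2},\tfrac{e+a-1}{2};e;4t(1-t)\big)$ is valid only for $t\le\tfrac12$: since the lower parameter of the inner $_2F_1$ exceeds the sum of its upper parameters by exactly $\tfrac12$, the right-hand side has a $|1-2t|$-type singularity at $t=\tfrac12$ while the left-hand side is analytic there, so the identity necessarily fails on $(\tfrac12,1)$. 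Integrating the series termwise over all of $[0,1]$ therefore gives a wrong value. Concretely, carrying your plan through yields
$$
{}_3F_2\bigg[\begin{matrix}a&1-a&c\\&e&2c+1-e\end{matrix}\bigg|\ 1\bigg]\ \text{``}=\text{''}\ \frac{\sqrt{\pi}\,\Gamma(2c+1-e)}{2^{2c-1}\Gamma(c+1-e)\Gamma(c+\frac12)}\ {}_3F_2\bigg[\begin{matrix}\frac{e-a}{2}&\frac{e+a-1}{2}&c\\&e&c+\frac12\end{matrix}\bigg|\ 1\bigg],
$$
and for $(a,c,e)=(\tfrac13,1,\tfrac12)$ the right-hand side is $\tfrac32\cdot{}_3F_2(\tfrac1{12},-\tfrac1{12},1;\tfrac12,\tfrac32;1)\approx 1.476$, whereas the lemma (and direct summation of the left-hand side) gives $\approx 1.439$.

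The second gap is that, even on the range where the transformation is legitimate, the beta integrals do not collapse the sum to anything elementary: after Legendre duplication the $n$-th term picks up the factor $(c)_n/(c+\tfrac12)_n$, so you are left with the nonterminating, Saalschützian ${}_3F_2\big(\tfrac{e-a}{2},\tfrac{e+a-1}{2},c;e,c+\tfrac12;1\big)$ displayed above. Saalschütz's theorem does not apply to nonterminating series, and evaluating this sum is equivalent in depth to Watson's theorem; the sentence ``the resulting single series is summable in closed form thanks to the balance $e+f=2c+1$'' is precisely the content that needs proof, not a routine final step. To repair the argument you would have to split the Euler integral at $t=\tfrac12$ and track both solutions of the hypergeometric equation on the second half (or abandon the integral route and derive the lemma, as the textbooks do, from Dixon's or Dougall's theorem via a Thomae-type two-term relation).
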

The classical gamma function has the following Gauss multiplication formula \cite[Page 371]{Robert00}.
\begin{lemma}\label{Gauss}
For $z\in\C$ and $m\geq2$ we have
$$
\prod_{0\leq j<m}\Gamma\l(z+\f{j}{m}\r)=(2\pi)^{(m-1)/2}m^{(1-2mz)/2}\cdot\Gamma(mz).
$$
\end{lemma}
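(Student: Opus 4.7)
The plan is to reduce the identity to a periodicity-plus-asymptotics argument for a suitable auxiliary function. Define
$$F(z):=\f{m^{mz}\prod_{j=0}^{m-1}\Gamma(z+j/m)}{\Gamma(mz)}.$$
First I will check that $F(z+1)=F(z)$. From $\Gamma(w+1)=w\Gamma(w)$ the product factor picks up $\prod_{j=0}^{m-1}(z+j/m)$, and the identity
$$\prod_{j=0}^{m-1}(z+j/m)=m^{-m}\prod_{j=0}^{m-1}(mz+j)=m^{-m}\f{\Gamma(mz+m)}{\Gamma(mz)}$$
exactly balances the extra $m^m$ coming from $m^{m(z+1)}/m^{mz}$ and the change of denominator from $\Gamma(mz)$ to $\Gamma(mz+m)$, giving $F(z+1)=F(z)$. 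A brief check that the simple poles of $\prod_j\Gamma(z+j/m)$, which lie exactly at $z=-n/m$ for $n\in\N$, are cancelled by the simple zeros of $1/\Gamma(mz)$ at the same points shows that $F$ extends to an entire, nowhere-vanishing function.

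Next I will pin down the value of $F$ using Stirling's asymptotic
$$\log\Gamma(w)=(w-\tfrac12)\log w-w+\f12\log(2\pi)+O(1/|w|),$$
valid as $|w|\to\infty$ in any sector $|\arg w|\ls\pi-\da$. Summing over $j=0,\ldots,m-1$, using $\log(z+j/m)=\log z+O(1/|z|)$, and invoking $\sum_{j=0}^{m-1}j/m=(m-1)/2$, I obtain
$$\sum_{j=0}^{m-1}\log\Gamma(z+j/m)=(mz-\tfrac12)\log z-mz+\f{m}{2}\log(2\pi)+O(1/|z|).$$
Comparing with $\log\Gamma(mz)=(mz-\tfrac12)(\log m+\log z)-mz+\tfrac12\log(2\pi)+O(1/|z|)$ and folding in the contribution $mz\log m$ from the prefactor, the $\log z$ and $-mz$ terms cancel and one is left with
$$\log F(z)=\f12\log m+\f{m-1}{2}\log(2\pi)+O(1/|z|)\quad\t{as }\mathrm{Re}(z)\to+\infty.$$

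Finally, since $F$ is entire and satisfies $F(z)=F(z+n)$ for every $n\in\Z$, the pointwise limit $F(z+n)\to m^{1/2}(2\pi)^{(m-1)/2}$ as $n\to\infty$ forces $F(z)\equiv m^{1/2}(2\pi)^{(m-1)/2}$, and rearranging recovers the identity $\prod_{j=0}^{m-1}\Gamma(z+j/m)=(2\pi)^{(m-1)/2}m^{(1-2mz)/2}\Gamma(mz)$. The main delicate step is the Stirling bookkeeping in the asymptotic expansion; the periodicity check and the pole-cancellation are routine manipulations of $\Gamma(w+1)=w\Gamma(w)$, and the passage from a periodic entire function with a limit along a ray to a constant is immediate.
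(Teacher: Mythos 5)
The paper offers no proof of this lemma at all: it is quoted as a known result from Robert's book (the citation \cite[Page 371]{Robert00} attached to the statement), so there is nothing internal to compare against. Your argument is a correct, self-contained proof by one of the standard routes: the auxiliary function $F(z)=m^{mz}\prod_{j}\Gamma(z+j/m)/\Gamma(mz)$ is $1$-periodic by the functional equation $\Gamma(w+1)=w\Gamma(w)$, extends to an entire nowhere-vanishing function because the simple poles of the numerator at $z=-n/m$ are matched by the simple zeros of $1/\Gamma(mz)$, and is then pinned down by Stirling along a ray; periodicity converts the limit $F(z+n)\to m^{1/2}(2\pi)^{(m-1)/2}$ into the identity $F\equiv m^{1/2}(2\pi)^{(m-1)/2}$, which is exactly the claimed formula. (Other classical proofs go through the Gauss limit formula or Bohr--Mollerup; yours is no less rigorous and has the advantage of needing only the functional equation and Stirling.) One point of your bookkeeping deserves a more careful sentence: the approximation $\log(z+j/m)=\log z+O(1/|z|)$ is, as stated, too coarse, since multiplying it by $z+j/m-\tfrac12$ leaves an $O(1)$ error; you need the next term, $\log(z+j/m)=\log z+\tfrac{j}{mz}+O(1/|z|^2)$, whose contribution $j/m$ cancels against the $-j/m$ coming from $-(z+j/m)$. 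Your displayed asymptotic for $\sum_j\log\Gamma(z+j/m)$ is nevertheless correct, so this is a gap in the written justification rather than in the result.
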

In fact, we prove the following result.
\begin{theorem}\label{padicgamma}For any odd prime $p$ we have
$$
\sum_{k=0}^{p-1}A_k\eq\begin{cases}-\l(\f{-1}{p}\r)\Gamma_p\l(\f{1}{8}\r)^2\Gamma_p\l(\f{3}{8}\r)^2\pmod{p^2}\quad&\text{if $p\eq1,3\pmod{8}$},\\ 0\pmod{p^2}\quad&\text{if $p\equiv5,7\pmod{8}$}.\end{cases}
$$
\end{theorem}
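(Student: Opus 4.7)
The plan is to express $\sum_{k=0}^{p-1} A_k$ modulo $p^2$ as a truncated hypergeometric sum whose full series admits the classical closed-form evaluation of Lemma~\ref{3F2trans}, and then translate that evaluation into Morita's $p$-adic gamma function.

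First I would apply Guo's identity (Lemma~\ref{guoid}) with $i=j=k$ to rewrite $\binom{n}{k}^2\binom{n+k}{k}^2$ as a single sum in a new index $s$. Substituting into $A_n$ and interchanging the order of summation in $\sum_{n=0}^{p-1}A_n$, the innermost sum over $n$ becomes $\binom{2s}{s}\sum_{n=s}^{p-1}\binom{n+s}{2s}$ via the identity $\binom{n}{s}\binom{n+s}{s}=\binom{2s}{s}\binom{n+s}{2s}$; the hockey-stick identity then evaluates this to $\binom{2s}{s}\binom{p+s}{2s+1}$. Since $\binom{p+s}{2s+1} = p\prod_{j=1}^{s}(p^2-j^2)/(2s+1)!$ vanishes identically for $s\ge p$, and modulo $p^2$ the product $\prod_{j=1}^s(p^2-j^2)$ is congruent to $(-1)^s(s!)^2$, one obtains $\binom{2s}{s}\binom{p+s}{2s+1} \eq (-1)^sp/(2s+1) \pmod{p^2}$ (interpreting $p/p=1$ in the edge case $s=(p-1)/2$). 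Applying Lemma~\ref{sj2j} to evaluate the resulting $s$-sum for each $k$ and rewriting $\binom{2k}{k}^2/[(4k+1)\binom{4k}{2k}]$ via Pochhammer symbols yields $\sum_{k=0}^{p-1}A_k \eq p\sum_{k=0}^{(p-1)/2}((1/2)_k)^3/[(3/4)_k(5/4)_k\,k!]\pmod{p^2}$, once one verifies that the partial-sum contributions from indices $k\ge(p+1)/2$ (where Lemma~\ref{sj2j} cannot be applied in full since $2k\ge p$) have nonnegative $p$-adic valuation and so are killed by the outer factor of $p$ modulo $p^2$.

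The right-hand side is a truncated ${}_3F_2$ whose parameters $a=b=c=\f12$, $e=\f34$, $f=\f54$ satisfy the Whipple-type conditions $a+b=1$ and $e+f=2c+1$ of Lemma~\ref{3F2trans}, so the full classical series equals $\pi\Gamma(3/4)\Gamma(5/4)/[\Gamma(5/8)^2\Gamma(7/8)^2]$. Applying the Gauss multiplication formula (Lemma~\ref{Gauss}) with $m=2$ at $z=\f18$ and $z=\f38$ rewrites $\Gamma(5/8)$ and $\Gamma(7/8)$ in terms of $\Gamma(1/8), \Gamma(3/8), \Gamma(1/4), \Gamma(3/4)$, and the reflection formula $\Gamma(z)\Gamma(1-z)=\pi/\sin(\pi z)$ together with $\Gamma(1/4)\Gamma(3/4)=\pi\sqrt{2}$ cleans up the remaining factors of $\pi$, leaving an expression of the form (constant)$\cdot\Gamma(1/8)^2\Gamma(3/8)^2$.

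The final step is the $p$-adic translation. Using the $p$-adic analogues of the Gauss duplication and reflection formulas for $\Gamma_p$ (which replace the transcendental constants $\pi$ and $\sqrt 2$ by signs $(-1)^{\<\cdot\>_p}$ via $\Gamma_p(x)\Gamma_p(1-x)=\pm 1$), the truncated congruence transforms into a product involving $\Gamma_p(1/8)^2\Gamma_p(3/8)^2$ times a sign determined by the residues $\<1/8\>_p$ and $\<3/8\>_p$, which evaluates to $-\jacob{-1}{p}$ when $p\eq 1,3\pmod 8$. For $p\eq 5,7\pmod 8$, a straightforward case analysis shows that $\<5/8\>_p$ or $\<7/8\>_p$ lies in the upper half of $\{0,\ldots,p-1\}$; correspondingly, the $p$-adic realization of the classical closed form acquires an extra factor of $p$, forcing the sum to vanish modulo $p^2$. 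The main obstacle will be this last translation---justifying rigorously that the classical closed-form evaluation lifts to a congruence modulo $p^2$ for the truncated series, correctly tracking the signs $(-1)^{\<\cdot\>_p}$ through the $p$-adic reflection in each of the four residue classes of $p\bmod 8$, and handling the edge cases where various Pochhammer symbols acquire factors of $p$ and combine with the outer factor $p$ to produce unit-sized contributions.
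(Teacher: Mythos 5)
Your combinatorial reduction is essentially the paper's own: Guo's identity (Lemma~\ref{guoid}) with $i=j$, interchange of summation, a closed form for $\sum_{k}\binom{k+s}{s}\binom{k}{s}$, the congruence $\binom{p+s}{s}\binom{p-1}{s}\equiv(-1)^s\pmod{p^2}$ in the equivalent form you state, and Lemma~\ref{sj2j}, arriving at $\sum_{k=0}^{p-1}A_k\equiv p\cdot{}_3F_2[\tfrac12,\tfrac12,\tfrac12;\tfrac34,\tfrac54;1]_{(p-1)/2}\pmod{p^2}$. One small repair is needed there: for the tail $j\ge(p+1)/2$ you claim the contributions have ``nonnegative $p$-adic valuation and so are killed by the outer factor of $p$ modulo $p^2$.'' Nonnegative valuation only yields divisibility by $p$ after multiplying by the outer $p$; you need each tail term to be divisible by $p$ itself. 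This is true, and is what the paper proves, via $\binom{2j}{s}\binom{s}{j}=\binom{2j}{j}\binom{j}{s-j}$ with $p\mid\binom{2j}{j}$ for $(p+1)/2\le j\le p-1$ and $p\nmid 2s+1$ in that range; you must supply an argument of this kind.

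The serious gap is in the second half. You propose to apply Lemma~\ref{3F2trans} at $a=b=c=\tfrac12$ to the \emph{full} series and then ``translate'' the value $\pi\Gamma(3/4)\Gamma(5/4)/(\Gamma(5/8)^2\Gamma(7/8)^2)$ into $p$-adic Gamma values. There is no such translation principle: that number is a fixed transcendental constant with no $p$-dependence, so in particular it cannot produce the dichotomy between $p\equiv1,3$ and $p\equiv5,7\pmod 8$, where an extra factor of $p$ must appear. The step you defer as ``the main obstacle'' is the heart of the proof, and the missing idea is a parameter perturbation: since $(\tfrac{1+p}{2})_k(\tfrac{1-p}{2})_k\equiv(\tfrac12)_k^2\pmod{p^2}$, and since $p/((1)_k(\tfrac34)_k(\tfrac54)_k)\in\Z_p$ for $k\le(p-1)/2$ (so the perturbation error is harmless under the outer factor $p$), one replaces two of the parameters $\tfrac12$ by $\tfrac{1\pm p}{2}$. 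The perturbed series \emph{terminates} at $k=(p-1)/2$, so the truncation is the full sum, Lemma~\ref{3F2trans} applies legitimately, and the closed form becomes $p\,\Gamma(\tfrac34)\Gamma(\tfrac54)\Gamma(\tfrac12)^2/(\Gamma(\tfrac{5-2p}8)\Gamma(\tfrac{7-2p}8)\Gamma(\tfrac{5+2p}8)\Gamma(\tfrac{7+2p}8))$. The Gamma quotients now pair off into finite products $\prod(x+j)$ (the pairing depending on $p\bmod 8$), each of which converts exactly into a $\Gamma_p$-quotient times a sign via \eqref{padicgamma1}, and it is precisely whether one of those linear factors equals $\pm p/8$ that decides whether the outer $p$ is cancelled ($p\equiv1,3\pmod 8$) or survives to give vanishing modulo $p^2$ ($p\equiv5,7\pmod 8$). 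Without this perturbation your outline cannot be completed as written.
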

\begin{remark}
(a) Here we illustrate that Theorem \ref{padicgamma} implies Theorem \ref{aperyth}. By \cite{Mo05} and \cite{SunZW12}, if $p\eq1,3\pmod{8}$ and $p=x^2+2y^2\ (x,y\in\mathbb{Z})$ we have
$$
{}_4F_3\bigg[\begin{matrix}\f12&\f14&\f34\\&1&1\end{matrix}\bigg|\ 1\bigg]_{p-1}\eq4x^2-2p\pmod{p^2}.
$$
From \cite{PTW} we have for $p\eq1,3\pmod{8}$
$$
{}_4F_3\bigg[\begin{matrix}\f12&\f14&\f34\\&1&1\end{matrix}\bigg|\ 1\bigg]_{p-1}\eq-\l(\f{-1}{p}\r)\Gamma_p\l(\f{1}{8}\r)^2\Gamma_p\l(\f{3}{8}\r)^2\pmod{p^2}.
$$
Combining the above, for $p\eq1,3\pmod{8}$ we have
$$
4x^2-2p\eq-\l(\f{-1}{p}\r)\Gamma_p\l(\f{1}{8}\r)^2\Gamma_p\l(\f{3}{8}\r)^2\pmod{p^2}.
$$
Thus Theorem \ref{aperyth} holds.

(b) In \cite[Conjecture 55]{SunZW19}, Sun also conjectured that if prime $p>3$ and $p\eq1,3\pmod{8}$, then
\begin{equation}\label{apery3F2}
\sum_{k=0}^{p-1}A_k\eq{}_3F_2\bigg[\begin{matrix}\f12&\f14&\f34\\&1&1\end{matrix}\bigg|\ 1\bigg]_{p-1}\pmod{p^3}.
\end{equation}
Actually, Theorem \ref{padicgamma} implies that \eqref{apery3F2} holds modulo $p^2$.
\end{remark}

\medskip
\noindent{\it Proof of Theorem \ref{padicgamma}}. By Lemma \ref{guoid} it is easy to see that
\begin{align*}
\sum_{k=0}^{p-1}A_k=&\sum_{k=0}^{p-1}\sum_{j=0}^{k}\binom{k+j}{j}^2\binom{k}{j}^2\\
=&\sum_{k=0}^{p-1}\sum_{j=0}^k\sum_{s=j}^{2j}\binom{s}{j}^2\binom{2j}{s}\binom{k}{s}\binom{k+s}{s}\\
=&\sum_{j=0}^{p-1}\sum_{s=j}^{2j}\binom{s}{j}^2\binom{2j}{s}\sum_{k=j}^{p-1}\binom{k+s}{s}\binom{k}{s}\\
=&\sum_{j=0}^{p-1}\sum_{s=j}^{2j}\binom{s}{j}^2\binom{2j}{s}\f{p}{2s+1}\binom{p+s}{s}\binom{p-1}{s},
\end{align*}
where in the last step we use the following identity which could be checked by induction on $n$:
$$
\sum_{k=s}^{n-1}\binom{k+s}{s}\binom{k}{s}=\f{n}{2s+1}\binom{n+s}{s}\binom{n-1}{s}.
$$
Note that for $s\in\{0,\ldots,p-1\}$ we have $ord_p(2s+1)\leq1$ and
$$
\binom{p+s}{s}\binom{p-1}{s}\eq(-1)^s\pmod{p^2}.
$$
Therefore, by Lemma \ref{sj2j} we have
\begin{align*}
\sum_{k=0}^{p-1}A_k\eq&p\sum_{j=0}^{p-1}\sum_{s=j}^{\min\{p-1,2j\}}\binom{s}{j}^2\binom{2j}{s}\f{(-1)^s}{2s+1}\\
=&p\sum_{j=0}^{(p-1)/2}\sum_{s=j}^{2j}\binom{s}{j}^2\binom{2j}{s}\f{(-1)^s}{2s+1}+p\sum_{j=(p+1)/2}^{p-1}\sum_{s=j}^{p-1}\binom{s}{j}^2\binom{2j}{s}\f{(-1)^s}{2s+1}\\
=&p\sum_{j=0}^{(p-1)/2}\f{\binom{2j}{j}^2}{(4j+1)\binom{4j}{2j}}+p\sum_{j=(p+1)/2}^{p-1}\sum_{s=j}^{p-1}\binom{s}{j}^2\binom{2j}{s}\f{(-1)^s}{2s+1}\\
=&p\cdot{}_3F_2\bigg[\begin{matrix}\f12&\f12&\f12\\&\f34&\f54\end{matrix}\bigg|\ 1\bigg]_{(p-1)/2}+p\sum_{j=(p+1)/2}^{p-1}\sum_{s=j}^{p-1}\binom{s}{j}^2\binom{2j}{s}\f{(-1)^s}{2s+1}\pmod{p^2}.
\end{align*}
It is easy to see that $ord_p(2s+1)=0$ and
$$
\binom{2j}{s}\binom{s}{j}=\binom{2j}{j}\binom{j}{s-j}\eq0\pmod{p}
$$
provided that $s,j\in\{(p+1)/2,\ldots,p-1\}$. Hence we have
$$
\sum_{k=0}^{p-1}A_k\eq p\cdot{}_3F_2\bigg[\begin{matrix}\f12&\f12&\f12\\&\f34&\f54\end{matrix}\bigg|\ 1\bigg]_{(p-1)/2}\pmod{p^2}.
$$
Clearly, at most one of $\<-3/4\>_p$ and $\<-5/4\>_p$ is smaller than $(p-1)/2$. Thus for $k\leq(p-1)/2$ we have
$$
\f{p}{(1)_k\l(\f{3}{4}\r)_k\l(\f{5}{4}\r)_k}\in\Z_p.
$$
By Lemmas \ref{3F2trans} and \ref{Gauss} and noting that $((1+p)/2)_k((1-p)/2)_k\eq(1/2)_k^2\pmod{p^2}$ we further get that
\begin{align*}
\sum_{k=0}^{p-1}A_k\eq& p\cdot{}_3F_2\bigg[\begin{matrix}\f12&\f12&\f12\\&\f34&\f54\end{matrix}\bigg|\ 1\bigg]_{\f{p-1}2}\\
\eq&p\cdot{}_3F_2\bigg[\begin{matrix}\f{1-p}2&\f{1+p}2&\f12\\&\f34&\f54\end{matrix}\bigg|\ 1\bigg]_{\f{p-1}2}\\
=&\f{p\Gamma\l(\f34\r)\Gamma\l(\f54\r)\Gamma\l(\f12\r)^2}{\Gamma\l(\f{5-2p}{8}\r)\Gamma\l(\f{7-2p}{8}\r)\Gamma\l(\f{5+2p}{8}\r)\Gamma\l(\f{7+2p}{8}\r)}\\
=&\f{p\Gamma\l(\f38\r)\Gamma\l(\f58\r)\Gamma\l(\f78\r)\Gamma\l(\f98\r)}{\Gamma\l(\f{5-2p}{8}\r)\Gamma\l(\f{7-2p}{8}\r)\Gamma\l(\f{5+2p}{8}\r)\Gamma\l(\f{7+2p}{8}\r)}\pmod{p^2}.
\end{align*}

If $p\eq1\pmod{8}$. In this case, we have
\begin{gather*}
\f{\Gamma\l(\f38\r)}{\Gamma\l(\f{5-2p}{8}\r)}=\f{\Gamma_p\l(\f38\r)}{\Gamma_p\l(\f{5-2p}{8}\r)}\cdot(-1)^{(p-1)/4},\quad \f{\Gamma\l(\f58\r)}{\Gamma\l(\f{7-2p}{8}\r)}=\f{\Gamma_p\l(\f58\r)}{\Gamma_p\l(\f{7-2p}{8}\r)}\cdot(-1)^{(p-1)/4},\\
\f{\Gamma\l(\f78\r)}{\Gamma\l(\f{5+2p}{8}\r)}=\f{\Gamma_p\l(\f78\r)}{\Gamma_p\l(\f{5+2p}{8}\r)}\cdot(-1)^{(p-1)/4},\quad
\f{\Gamma\l(\f98\r)}{\Gamma\l(\f{7+2p}{8}\r)}=\f{\Gamma_p\l(\f98\r)}{\f{p}{8}\cdot\Gamma_p\l(\f{7+2p}{8}\r)}\cdot(-1)^{(p-1)/4}.
\end{gather*}
It follows that
\begin{align*}
\sum_{k=0}^{p-1}A_k\eq&\f{8\Gamma_p\l(\f38\r)\Gamma_p\l(\f58\r)\Gamma_p\l(\f78\r)\Gamma_p\l(\f98\r)}{\Gamma_p\l(\f{5-2p}{8}\r)\Gamma_p\l(\f{7-2p}{8}\r)\Gamma_p\l(\f{5+2p}{8}\r)\Gamma_p\l(\f{7+2p}{8}\r)}\\
\eq&-\f{\Gamma_p\l(\f38\r)\Gamma_p\l(\f18\r)}{\Gamma_p\l(\f58\r)\Gamma_p\l(\f78\r)}\pmod{p^2}.
\end{align*}
It is known that
$$
\Gamma_p(z)\Gamma_p(1-z)=(-1)^{p-\<-z\>_p}
$$
for $z\in\Z_p$ (cf. \cite{Robert00}). Thus
$$
\f{1}{\Gamma_p\l(\f58\r)\Gamma_p\l(\f78\r)}=\Gamma_p\l(\f38\r)\Gamma_p\l(\f18\r)(-1)^{2p-(5p-5)/8-(7p-7)/8}=\Gamma_p\l(\f38\r)\Gamma_p\l(\f18\r).
$$
Therefore
$$
\sum_{k=0}^{p-1}A_k\eq-\Gamma_p\l(\f{1}{8}\r)^2\Gamma_p\l(\f{3}{8}\r)^2\pmod{p^2}.
$$

If $p\eq3\pmod{8}$. Similarly, we arrive at
\begin{gather*}
\f{\Gamma\l(\f38\r)}{\Gamma\l(\f{5+2p}{8}\r)}=\f{\Gamma_p\l(\f38\r)}{\f{p}{8}\cdot\Gamma_p\l(\f{5+2p}{8}\r)}\cdot(-1)^{(p+1)/4},\quad \f{\Gamma\l(\f58\r)}{\Gamma\l(\f{7+2p}{8}\r)}=\f{\Gamma_p\l(\f58\r)}{\Gamma_p\l(\f{7+2p}{8}\r)}\cdot(-1)^{(p+1)/4},\\
\f{\Gamma\l(\f78\r)}{\Gamma\l(\f{5-2p}{8}\r)}=\f{\Gamma_p\l(\f78\r)}{\Gamma_p\l(\f{5-2p}{8}\r)}\cdot(-1)^{(p+1)/4},\quad
\f{\Gamma\l(\f98\r)}{\Gamma\l(\f{7-2p}{8}\r)}=\f{\Gamma_p\l(\f98\r)}{\Gamma_p\l(\f{7-2p}{8}\r)}\cdot(-1)^{(p+1)/4}.
\end{gather*}
Thus we also obtain that
$$
\sum_{k=0}^{p-1}A_k\eq-\f{\Gamma_p\l(\f38\r)\Gamma_p\l(\f18\r)}{\Gamma_p\l(\f58\r)\Gamma_p\l(\f78\r)}\pmod{p^2}.
$$
However, in this case we have
$$
\f{1}{\Gamma_p\l(\f58\r)\Gamma_p\l(\f78\r)}=\Gamma_p\l(\f38\r)\Gamma_p\l(\f18\r)(-1)^{2p-(7p-5)/8-(5p-7)/8}=-\Gamma_p\l(\f38\r)\Gamma_p\l(\f18\r).
$$
It follows that
$$
\sum_{k=0}^{p-1}A_k\eq\Gamma_p\l(\f{1}{8}\r)^2\Gamma_p\l(\f{3}{8}\r)^2\pmod{p^2}.
$$
The remaining cases can be proved similarly. The proof of Theorem \ref{padicgamma} is now complete.\qed

\medskip
\section{Proofs of Theorems \ref{4F3analog} and \ref{4F3(-1)analog}}
\setcounter{lemma}{0}
\setcounter{theorem}{0}
\setcounter{equation}{0}
\setcounter{conjecture}{0}
\setcounter{remark}{0}
Theorem \ref{4F3analog} is actually a $p$-aidc analogue of the following ${}_4F_3$ identity.
\begin{lemma}\cite[Page 182, 25(a)]{AAR99}\label{4F3identity} For any $\alpha,\beta,\gamma\in\C$ we have
\begin{align}
&{}_4F_3\bigg[\begin{matrix}\alpha&1+\f{\alpha}{2}&\beta&\gamma\\ &\f{\alpha}{2}&1+\alpha-\beta&1+\alpha-\gamma\end{matrix}\bigg|\ 1\bigg]\notag\\
=&\f{\Gamma(1+\alpha-\beta)\Gamma(1+\alpha-\gamma)\Gamma\l(\f{1+\alpha}{2}\r)\Gamma\l(\f{1+\alpha}{2}-\beta-\gamma\r)}{\Gamma(1+\alpha)\Gamma(1+\alpha-\beta-\gamma)\Gamma\l(\f{1+\alpha}{2}-\beta\r)\Gamma(\f{1+\alpha}{2}-\gamma)}.
\end{align}
\end{lemma}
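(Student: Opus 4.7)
I would prove Lemma~\ref{4F3identity} by a one-parameter specialization of Whipple's classical non-terminating summation for the very-well-poised ${}_5F_4(1)$, namely
$$
{}_5F_4\bg[\begin{matrix}a&1+\f{a}{2}&b&c&d\\ &\f{a}{2}&1+a-b&1+a-c&1+a-d\end{matrix}\bg|\ 1\bg]=\f{\Gamma(1+a-b)\Gamma(1+a-c)\Gamma(1+a-d)\Gamma(1+a-b-c-d)}{\Gamma(1+a)\Gamma(1+a-b-c)\Gamma(1+a-b-d)\Gamma(1+a-c-d)},
$$
valid whenever $\Re(1+a-b-c-d)>0$. This ${}_5F_4$ summation is classical and can be deduced from Dougall's terminating ${}_7F_6(1)$ theorem by a limiting argument, or equivalently by iterated use of the Euler integral representation of ${}_2F_1$; I would invoke it as a known identity from Andrews--Askey--Roy.

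The crux is the specialization $d=(1+a)/2$, at which $1+a-d=d$. The factor $(d)_k/(1+a-d)_k$ inside the series is then identically $1$ term by term, so it drops out and the left-hand side collapses to exactly the ${}_4F_3(1)$ in the statement of Lemma~\ref{4F3identity}. On the right-hand side, the four $d$-dependent gammas become $\Gamma((1+a)/2)$, $\Gamma((1+a)/2-b-c)$, $\Gamma((1+a)/2-b)$ and $\Gamma((1+a)/2-c)$---precisely the half-integer arguments displayed in the statement. Renaming $(a,b,c)\to(\alpha,\beta,\gamma)$ then yields the identity.

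Both sides of Lemma~\ref{4F3identity} are meromorphic functions of $(\alpha,\beta,\gamma)$, so the equality extends by analytic continuation from the convergence domain $\Re((1+\alpha)/2-\beta-\gamma)>0$ to all generic complex parameters. In the downstream applications in Sections~3 and~4 the parameters are $p$-adic or the series is truncated, so convergence is never the issue.

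The specialization itself offers essentially no obstacle: the cancellation $(d)_k/(1+a-d)_k\equiv1$ at $d=(1+a)/2$ is exact, and the gamma-function substitution on the right is automatic. All the real content is packaged into Whipple's ${}_5F_4(1)$ summation, which I would treat as an off-the-shelf classical tool, in the same spirit that Lemmas~\ref{3F2trans} and~\ref{Gauss} are invoked elsewhere in the paper.
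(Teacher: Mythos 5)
Your derivation is correct. The paper itself gives no proof of this lemma---it is invoked as a known identity from Andrews--Askey--Roy (p.~182, Exercise 25(a))---and your route, specializing the very-well-poised ${}_5F_4(1)$ summation at $d=\frac{1+\alpha}{2}$ so that the pair $(d)_k/(1+\alpha-d)_k$ cancels term by term, is precisely the standard derivation of that exercise. The bookkeeping checks out: $1+\alpha-d=\frac{1+\alpha}{2}$, $1+\alpha-\beta-d=\frac{1+\alpha}{2}-\beta$, $1+\alpha-\gamma-d=\frac{1+\alpha}{2}-\gamma$ and $1+\alpha-\beta-\gamma-d=\frac{1+\alpha}{2}-\beta-\gamma$, which yields exactly the displayed gamma quotient, and the extension beyond the convergence half-plane by analytic continuation is legitimate.
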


The following lemma gives the well-known Euler's reflection formula and its $p$-adic analogue.
\begin{lemma}\cite[Pages 369--371]{Robert00}
For any $z\in\C$,
\begin{equation}\label{eulerreflection}
\Gamma(z)\Gamma(1-z)=\f{\pi}{\sin\pi z}.
\end{equation}
The above formula has a $p$-adic analogue as follows
\begin{equation}\label{eulerpadic}
\Gamma_p(z)\Gamma_p(1-z)=(-1)^{p-\<-z\>_p}
\end{equation}
for $z\in\Z_p$.
\end{lemma}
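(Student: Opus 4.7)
The plan is to prove the classical reflection formula by the Weierstrass product for $\Gamma$ and then establish the $p$-adic analogue by verifying it on a dense subset of $\Z_p$ and extending by continuity.

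For the classical formula $\Gamma(z)\Gamma(1-z)=\pi/\sin(\pi z)$, I would begin with the Weierstrass product
$$\f{1}{\Gamma(z)}=ze^{\gamma z}\prod_{n\geq 1}\l(1+\f{z}{n}\r)e^{-z/n},$$
combine it with the analogous product for $1/\Gamma(-z)$, and use the identity $\Gamma(1-z)=-z\,\Gamma(-z)$. The exponential factors $e^{\pm\gamma z}$ and $e^{\pm z/n}$ cancel pairwise, so
$$\f{1}{\Gamma(z)\Gamma(1-z)}=z\prod_{n\geq 1}\l(1-\f{z^2}{n^2}\r),$$
which is exactly $\sin(\pi z)/\pi$ by Euler's product expansion of the sine function. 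Meromorphic continuation then gives the identity on all of $\C$.

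For the $p$-adic analogue, the map $z\mapsto\Gamma_p(z)\Gamma_p(1-z)$ is continuous on $\Z_p$, while $z\mapsto(-1)^{p-\<-z\>_p}$ is locally constant on each coset $r+p\Z_p$ with $r\in\{0,1,\cs,p-1\}$. Since positive integers are dense in each coset, it suffices to check the identity at every $n\in\N$. For $n=0$ we have $\Gamma_p(0)\Gamma_p(1)=1\cdot(-1)=-1=(-1)^p$, matching $\<0\>_p=0$; and for $1\leq n\leq p-1$, the definition gives $\Gamma_p(n)=(-1)^n(n-1)!$, while iterating \eqref{padicgamma1} backward from $\Gamma_p(0)=1$ yields $\Gamma_p(1-n)=1/(n-1)!$, so $\Gamma_p(n)\Gamma_p(1-n)=(-1)^n=(-1)^{p-(p-n)}=(-1)^{p-\<-n\>_p}$, as desired.

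To lift from $\{0,1,\cs,p-1\}$ to all positive integers I would establish the period-$p$ invariance $\Gamma_p(n+p)\Gamma_p(1-n-p)=\Gamma_p(n)\Gamma_p(1-n)$. Telescoping \eqref{padicgamma1} shows that $\Gamma_p(n+p)/\Gamma_p(n)=(-1)^p P_1$, where $P_1:=\prod_{k\in S,\,p\nmid k}k$ for $S=\{n,\cs,n+p-1\}$, and a parallel computation gives $\Gamma_p(1-n-p)/\Gamma_p(1-n)=(-1)^p/\prod_{k\in S,\,p\nmid k}(-k)=-1/P_1$ (since $(-1)^{p-1}=1$ for odd $p$); the product of the two ratios is therefore $(-1)^{p+1}=1$. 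The main obstacle is precisely this sign-and-product bookkeeping: one must verify that each telescoped ratio contributes $p$ factors of $-1$ (one per step, whether or not $p\mid k$) and that the unit parts, built from the two reflected residue systems modulo $p$, are exactly reciprocal. With the invariance in hand, continuity on each coset $r+p\Z_p$ extends the identity from $\N$ to all of $\Z_p$.
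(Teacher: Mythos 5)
Your proposal is correct. Note, however, that the paper offers no proof of this lemma at all: it is quoted as a known result with a citation to Robert's book (pages 369--371), so there is no ``paper proof'' to match. What you have written is essentially the standard textbook derivation made self-contained. The classical half via the Weierstrass product and Euler's sine product is fine (with the usual understanding that the identity is one of meromorphic functions, since both sides blow up at integers --- a caveat already implicit in the paper's ``for any $z\in\C$''). The $p$-adic half is also sound: the right-hand side $(-1)^{p-\<-z\>_p}$ depends only on $z\bmod p$, hence is locally constant, the left-hand side is continuous, and $\N$ meets every coset $r+p\Z_p$ densely, so checking at nonnegative integers suffices. Your base-case computation $\Gamma_p(n)\Gamma_p(1-n)=(-1)^n(n-1)!\cdot\frac{1}{(n-1)!}=(-1)^n=(-1)^{p-\<-n\>_p}$ for $1\le n\le p-1$ (and the $n=0$ case) is right, and the period-$p$ invariance is the one genuinely delicate point; your sign count is correct: the forward telescope over $S=\{n,\dots,n+p-1\}$ gives $(-1)^pP_1$ with $P_1=\prod_{k\in S,\,p\nmid k}k$, the backward telescope over $-S$ gives $-1/P_1$, and the product is $(-1)^{p+1}=1$ for odd $p$, matching the invariance of $(-1)^{p-\<-z\>_p}$ under $z\mapsto z+p$. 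One small point worth making explicit: you invoke \eqref{padicgamma1} at negative integer arguments, which is legitimate only because that functional equation is stated for the continuous extension of $\Gamma_p$ to all of $\Z_p$, not just for the original definition on $\N$; since the paper records \eqref{padicgamma1} in exactly that generality, this is not a gap.
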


\medskip
\noindent{\it Proof of Theorem \ref{4F3analog}}. Here we just give the detailed proof of the first case where $\<-\alpha\>_p$ is odd and $\<-\alpha\>_p<\f{2p+1}{3}$, since the proofs of other cases are similar.

Let $a=\<-\alpha\>_p$. Now we assume that $a$ is even and $a<\f{2p+1}{3}$. Set
\begin{align*}
\Psi(x,y,z):=&{}_4F_3\bigg[\begin{matrix}-a+x&1+\f{-a+x}{2}&-a+y&-a+z\\ &\f{-a+x}{2}&1+x-y&1+x-z\end{matrix}\bigg|\ 1\bigg]_{p-1}\notag\\
&-\f{2\Gamma_p(1+x-y)\Gamma_p(1+x-z)\Gamma_p\l(\f{1-a+x}{2}\r)\Gamma_p\l(\f{1+3a+x-2y-2z}{2}\r)}{\Gamma_p(1-a+x)\Gamma_p(1+a+x-y-z)\Gamma_p\l(\f{1+a+x-2y}{2}\r)\Gamma_p\l(\f{1+a+x-2z}{2}\r)}.
\end{align*}
It is easy to see that
$$
{}_4F_3\bigg[\begin{matrix}\alpha&1+\f{\alpha}{2}&\alpha&\alpha\\ &\f{\alpha}{2}&1&1\end{matrix}\bigg|\ 1\bigg]_{p-1}\eq2g_p(\alpha)\pmod{p^3}
$$
is equivalent to
\begin{equation}\label{key}
\Psi(sp,sp,sp)\eq0\pmod{p^3},
\end{equation}
where $s=(\alpha+a)/p$. For $p=3,5$ we can verify \eqref{key} for any $1\leq\alpha\leq p^3$ numerically. Now we assume that $p\geq7$. In view of the local-global theorem from \cite{PTW}, we only need to show that
\begin{equation}\label{key'}
\Psi(rp,sp,tp)\eq0\pmod{p^3}
\end{equation}
provided that $r,s,t\in\Z_p$ and at least one of $r,s,t$ is zero. We first show that
\begin{equation}\label{r=0}
\Psi(0,sp,tp)=0
\end{equation}
for each $s,t\in\Z_p$. In fact, we may assume that $sp,tp,(s+t)p\in\Q\setminus\Z$ (since any $x\in\Z\cap\Z_p$ can be approximated by a sequence of $p$-adic integers $\{x_m\}_{m\geq0}$ in $(\Q\setminus\Z)\cap\Z_p$). By \eqref{4F3identity} we have
\begin{align*}
&{}_4F_3\bigg[\begin{matrix}-a&1+\f{-a}{2}&-a+sp&-a+tp\\ &\f{-a}{2}&1-sp&1-tp\end{matrix}\bigg|\ 1\bigg]_{p-1}\\
=&\lim_{z\to0}{}_4F_3\bigg[\begin{matrix}-a+z&1+\f{-a+z}{2}&-a+sp&-a+tp\\ &\f{-a+z}{2}&1+z-sp&1+z-tp\end{matrix}\bigg|\ 1\bigg]\\
=&\f{\Gamma(1-sp)}{\Gamma\l(\f{1+a-2s p}{2}\r)}\cdot\f{\Gamma(1-tp)}{\Gamma\l(\f{1+a-2tp}{2}\r)}\cdot\f{\Gamma\l(\f{1+3a-2s p-2t p}{2}\r)}{\Gamma(1+a-sp-tp)}\cdot\lim_{z\to0}\f{\Gamma\l(\f{1-a+z}{2}\r)}{\Gamma(1-a+z)}.
\end{align*}
Since $a$ is odd and $a<(2p+1)/3$, by \eqref{padicgamma1} we have
\begin{gather*}
\f{\Gamma(1-sp)}{\Gamma\l(\f{1+a-2s p}{2}\r)}=\prod_{j=1}^{\f{a-1}{2}}\f{1}{j+sp}=(-1)^{\f{a-1}{2}}\cdot\f{\Gamma_p(1-sp)}{\Gamma_p\l(\f{1+a-2s p}{2}\r)},\\
\f{\Gamma(1-tp)}{\Gamma\l(\f{1+a-2tp}{2}\r)}=\prod_{j=1}^{\f{a-1}{2}}\f{1}{j+tp}=(-1)^{\f{a-1}{2}}\cdot\f{\Gamma_p(1-tp)}{\Gamma_p\l(\f{1+a-2t p}{2}\r)},\\
\f{\Gamma\l(\f{1+3a-2s p-2t p}{2}\r)}{\Gamma(1+a-sp-tp)}=\prod_{j=1}^{\f{a-1}{2}}(a+j-sp-tp)=(-1)^{\f{a-1}{2}}\f{\Gamma_p\l(\f{1+3a-2s p-2t p}{2}\r)}{\Gamma_p(1+a-sp-tp)}.
\end{gather*}
In light of \eqref{eulerreflection} we obtain
\begin{gather*}
\Gamma\l(\f{1-a+z}{2}\r)\Gamma\l(\f{1+a-z}{2}\r)=\f{\pi}{\sin\pi\f{1-a+z}{2}},\\
\Gamma(1-a+z)\Gamma(a-z)=\f{\pi}{\sin\pi(1-a+z)}.
\end{gather*}
Furthermore,
\begin{align*}
\lim_{z\to0}\f{\Gamma\l(\f{1-a+z}{2}\r)}{\Gamma(1-a+z)}=&\lim_{z\to0}\f{\Gamma(a-z)}{\Gamma(\f{1+a-z}{2})}\cdot\f{\sin\pi(1-a+z)}{\sin\pi\f{1-a+z}{2}}\\
=&2(-1)^{\f{a-1}{2}}\f{\Gamma_p(a)}{\Gamma_p(\f{1+a}{2})}\cdot\f{\cos\pi(a-1)}{\cos\pi\f{a-1}{2}}=\f{2\Gamma_p(a)}{\Gamma_p(\f{1+a}{2})}.
\end{align*}
Noting \eqref{eulerpadic} we have
$$
\f{2\Gamma_p(a)}{\Gamma_p(\f{1+a}{2})}=(-1)^{\f{a-1}{2}}\f{2\Gamma_p\l(\f{1-a}{2}\r)}{\Gamma_p(1-a)}.
$$
By the above we arrive at
\begin{align*}
&{}_4F_3\bigg[\begin{matrix}-a&1+\f{-a}{2}&-a+sp&-a+tp\\ &\f{-a}{2}&1-sp&1-tp\end{matrix}\bigg|\ 1\bigg]_{p-1}\\
=&\f{\Gamma_p(1-sp)}{\Gamma_p\l(\f{1+a-2s p}{2}\r)}\cdot\f{\Gamma_p(1-tp)}{\Gamma_p\l(\f{1+a-2t p}{2}\r)}\cdot\f{\Gamma_p\l(\f{1+3a-2s p-2t p}{2}\r)}{\Gamma_p(1+a-sp-tp)}\cdot\f{2\Gamma_p\l(\f{1-a}{2}\r)}{\Gamma_p(1-a)}.
\end{align*}
Thus \eqref{r=0} is concluded.

Now we turn to show
\begin{equation}\label{s=0}
\Psi(rp,0,tp)=0
\end{equation}
for any $r,t\in\Z_p$.
Also, we may assume that $rp,tp,rp-tp\in\Q\setminus\Z$. By \eqref{4F3identity} we have
\begin{align*}
&{}_4F_3\bigg[\begin{matrix}-a+rp&1+\f{-a+rp}{2}&-a&-a+tp\\ &\f{-a+rp}{2}&1+rp&1+rp-tp\end{matrix}\bigg]_{p-1}\\
=&{}_4F_3\bigg[\begin{matrix}-a+rp&1+\f{-a+rp}{2}&-a&-a+tp\\ &\f{-a+rp}{2}&1+rp&1+rp-tp\end{matrix}\bigg]\\
=&\f{\Gamma(1+rp)}{\Gamma(1-a+rp)}\cdot\f{\Gamma(1+rp-tp)}{\Gamma(1+a+rp-tp)}\cdot\f{\Gamma\l(\f{1-a+rp}{2}\r)}{\Gamma\l(\f{1+a+rp}{2}\r)}\cdot\f{\Gamma\l(\f{1+3a+rp}{2}-tp\r)}{\Gamma\l(\f{1+a+rp}{2}-tp\r)}\\
=&\f{rp\cdot\Gamma_p(1+rp)}{\Gamma_p(1-a+rp)}\cdot\f{\Gamma_p(1+rp-tp)}{\Gamma_p(1+a+rp-tp)}\cdot\f{\Gamma_p\l(\f{1-a+rp}{2}\r)}{\f{1}{2}rp\cdot\Gamma_p\l(\f{1+a+rp}{2}\r)}\cdot\f{\Gamma_p\l(\f{1+3a+rp}{2}-tp\r)}{\Gamma_p\l(\f{1+a+rp}{2}-tp\r)}.
\end{align*}
Thus \eqref{s=0} holds. Symmetrically, we also have $\Psi(rp,sp,0)=0$ for any $r,s\in\Z_p$. The proof of the first case is now complete.\qed
\begin{remark}
Set
\begin{align*}
\Phi(x,y,z):=&{}_4F_3\bigg[\begin{matrix}-a+x&1+\f{-a+x}{2}&-a+y&-a+z\\ &\f{-a+x}{2}&1+x-y&1+x-z\end{matrix}\bigg|\ 1\bigg]_{p-1}\notag\\
&-f(x,y,z)\cdot\f{\Gamma_p(1+x-y)\Gamma_p(1+x-z)\Gamma_p\l(\f{1-a+x}{2}\r)\Gamma_p\l(\f{1+3a+x-2y-2z}{2}\r)}{\Gamma_p(1-a+x)\Gamma_p(1+a+x-y-z)\Gamma_p\l(\f{1+a+x-2y}{2}\r)\Gamma_p\l(\f{1+a+x-2z}{2}\r)},
\end{align*}
where
$$
f(x,y,z)=
\begin{cases}
\displaystyle 2p+x-2y-2z,\quad & \t{$a$ is odd and $a\geq\f{2p+1}{3}$,}\vspace{2mm}\\
\displaystyle x,\quad & \t{$a$ is even and $a<\f{p+1}{3}$,}\vspace{2mm}\\
\displaystyle x\cdot\f{p+x-2y-2z}{2},\quad & \t{$a$ is even and $a\geq\f{p+1}{3}$.}
\end{cases}
$$
To obtain the remaining cases we only need to show the following result whose proof is left to the reader as an excercise:
$$
\Phi(rp,sp,tp)=0
$$
provided $r,s,t\in\Z_p$ and at least one of $r,s,t$ is zero.
\end{remark}
\medskip
We now consider the $p$-adic analogue of the following identity due to Whipple.
\begin{lemma}\cite[(5.1)]{Whipple}\label{whipple}
For any $\alpha,\beta,\gamma\in\C$ we have
$$
{}_4F_3\bigg[\begin{matrix}\alpha&1+\f{\alpha}{2}&\beta&\gamma\\&\f{\alpha}{2}&1+\alpha-\beta&1+\alpha-\gamma\end{matrix}\bigg|-1\bigg]=\f{\Gamma(1+\alpha-\beta)\Gamma(1+\alpha-\gamma)}{\Gamma(1+\alpha)\Gamma(1+\alpha-\beta-\gamma)}.
$$
\end{lemma}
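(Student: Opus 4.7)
The claim is Whipple's classical evaluation of a very-well-poised ${}_4F_3$ at $z=-1$. The upper parameters $(\alpha,\, 1+\alpha/2,\, \beta,\, \gamma)$ and lower parameters $(\alpha/2,\, 1+\alpha-\beta,\, 1+\alpha-\gamma)$ satisfy the very-well-poised matching condition $1+\alpha=\alpha_i+\beta_i$ for each column beyond the first, and the distinguished pair $(1+\alpha/2,\alpha/2)$ supplies the very-well-poised factor $(1+\alpha/2)_k/(\alpha/2)_k=(\alpha+2k)/\alpha$. This structural symmetry is precisely what forces closed-form summation at $z=-1$; a generic ${}_4F_3(-1)$ series has no elementary closed form.

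My plan is to reduce the sum to an application of Kummer's theorem
$${}_2F_1(a,b;1+a-b;-1)=\f{\Gamma(1+a-b)\Gamma(1+a/2)}{\Gamma(1+a)\Gamma(1+a/2-b)}$$
by way of Bailey's integral-transform method. First I would replace the Pochhammer ratio $(\gamma)_k/(1+\alpha-\gamma)_k$ by the beta-integral representation
$$\f{(\gamma)_k}{(1+\alpha-\gamma)_k}=\f{\Gamma(1+\alpha-\gamma)}{\Gamma(\gamma)\Gamma(1+\alpha-2\gamma)}\int_0^1 t^{\gamma+k-1}(1-t)^{\alpha-2\gamma}\,dt,$$
initially for parameters in a suitable strip and then extended elsewhere by analytic continuation in $\gamma$. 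Interchanging sum and integral produces $\int_0^1 t^{\gamma-1}(1-t)^{\alpha-2\gamma}\cdot G(t)\,dt$, where $G(t)$ is a very-well-poised ${}_3F_2(-t)$ in the remaining parameters $(\alpha,\beta)$. The very-well-poised factor lets me split $G(t)$ via $\alpha+2k=(\alpha+k)+k$ into two pieces, each of which (after a Pochhammer shift and reindexing of the second piece) is a standard ${}_2F_1(-t)$; recombining, Kummer's theorem evaluates $G(t)$ in closed form.

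The remaining $t$-integral is then a beta integral yielding a ratio of gamma functions. Combining this with the prefactor from the beta representation and simplifying via the duplication formula (Lemma \ref{Gauss} with $m=2$) together with the reflection formula \eqref{eulerreflection} should collapse everything to the claimed right-hand side $\Gamma(1+\alpha-\beta)\Gamma(1+\alpha-\gamma)/(\Gamma(1+\alpha)\Gamma(1+\alpha-\beta-\gamma))$. Convergence of the infinite sum at $z=-1$ holds under $\textrm{Re}(\alpha-\beta-\gamma)>-1$, and the identity extends to all admissible complex parameters by meromorphic continuation.

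I expect the main obstacle to be the middle step, namely the reduction of the inner well-poised ${}_3F_2(-t)$ to a Kummer-summable ${}_2F_1(-t)$: the Pochhammer bookkeeping in splitting and reindexing the very-well-poised factor is delicate and easy to mishandle. A cleaner integral-free alternative is to derive Whipple's identity as a degenerate limit of Dougall's ${}_7F_6$ summation (sending two numerator parameters to infinity with their product held fixed), which bypasses Bailey's transform entirely at the cost of requiring Dougall's theorem as input. A third option is the WZ method: one searches for a rational telescoping certificate $R(k)$ such that $F_k:=R(k)\cdot(\textrm{summand})$ satisfies $F_{k+1}-F_k$ equal to the summand weighted by $(\alpha+2k)/\alpha$; the sum then collapses to $F_0-\lim_{k\to\infty}F_k$, and the existence of such a rational $R$ is guaranteed by Gosper's algorithm in the very-well-poised case.
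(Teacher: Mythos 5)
The paper does not actually prove this lemma: it is Whipple's classical very-well-poised ${}_4F_3(-1)$ summation, quoted verbatim from Whipple (1926), eq.\ (5.1), so the only question is whether your argument stands on its own. Your primary route has a genuine gap at its central step. After inserting the beta-integral representation for $(\gamma)_k/(1+\alpha-\gamma)_k$ (which is correct, for $\Re(\gamma)>0$ and $\Re(1+\alpha-2\gamma)>0$) and interchanging sum and integral, the inner series is
$$
G(t)=\sum_{k\ge0}\f{\alpha+2k}{\alpha}\cdot\f{(\alpha)_k(\beta)_k}{(1+\alpha-\beta)_k\,k!}(-t)^k,
$$
a very-well-poised ${}_3F_2$ at the \emph{variable} argument $-t$. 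Your splitting $\alpha+2k=(\alpha+k)+k$ does turn this into
${}_2F_1(\alpha+1,\beta;1+\alpha-\beta;-t)-\f{\beta t}{1+\alpha-\beta}\,{}_2F_1(\alpha+1,\beta+1;2+\alpha-\beta;-t)$,
but Kummer's theorem evaluates ${}_2F_1(a,b;1+a-b;z)$ only at the single point $z=-1$; it says nothing about argument $-t$ for $0<t<1$, and $G(t)$ has no elementary closed form (the relevant quadratic transformation expresses it as a power of $1+t$ times another ${}_2F_1$ of argument $4t/(1+t)^2$, not a gamma quotient). If instead you integrate the two ${}_2F_1(-t)$ pieces termwise against $t^{\gamma-1}(1-t)^{\alpha-2\gamma}$, you merely reassemble two \emph{nearly-poised} ${}_3F_2(-1)$ series, whose known evaluations are themselves consequences of the very identity being proved; so the method either stalls or becomes circular. (A smaller slip: the convergence condition at $z=-1$ is $\Re(\f{\alpha}{2}-\beta-\gamma)>-1$, coming from $\Re(\sum b_i-\sum a_i)=\Re(1+\alpha-2\beta-2\gamma)>-1$, not $\Re(\alpha-\beta-\gamma)>-1$.)

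The fallback routes you mention are the ones that actually work, and one of them should be the main argument. Cleanest is confluence from the very-well-poised ${}_5F_4(1)$ summation (itself a degenerate case of Dougall's ${}_7F_6$): letting $d\to\infty$ in ${}_5F_4\bigl[a,1+\f a2,b,c,d;\f a2,1+a-b,1+a-c,1+a-d;1\bigr]$ sends $(d)_k/(1+a-d)_k\to(-1)^k$ and the right-hand side degenerates to exactly the claimed gamma quotient; you must justify the termwise limit (Tannery/dominated convergence) and then extend in $\beta,\gamma$ by analytic continuation. The WZ/telescoping route is also viable but, as described, only certifies the terminating case ($\beta$ a negative integer); passing to general complex parameters then requires Carlson's theorem, a step your sketch omits.
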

\medskip
\noindent{\it Proof of Theorem \ref{4F3(-1)analog}}. Let $a=\<-\alpha\>_p$. Set
\begin{align*}
\Omega(x,y,z):=&{}_4F_3\bigg[\begin{matrix}-a+x&1+\f{-a+x}{2}&-a+y&-a+z\\&\f{-a+x}{2}&1+x-y&1+x-z\end{matrix}\bigg|-1\bigg]_{p-1}\\
&-x\cdot\f{\Gamma_p(1+x-y)\Gamma_p(1+x-z)}{\Gamma_p(1-a+x)\Gamma_p(1+a+x-y-z)}.
\end{align*}
Similarly as in the proof of Theorem \ref{4F3analog}, it suffices to show that
\begin{equation}\label{key2}
\Omega(rp,sp,tp)\eq0\pmod{p^3}
\end{equation}
provided $r,s,t\in\Z_p$ and at least one of $r,s,t$ is zero. Again, we may assume $p\geq7$.

We first consider the case $r=0$. Also, assume that $sp,tp,(s+t)p\in\Q\setminus\Z$. By Lemma \ref{whipple}, we find that
\begin{align*}
&{}_4F_3\bigg[\begin{matrix}-a&1+\f{-a}{2}&-a+sp&-a+tp\\&\f{-a}{2}&1-sp&1-tp\end{matrix}\bigg|-1\bigg]_{p-1}={}_4F_3\bigg[\begin{matrix}-a&1+\f{-a}{2}&-a+sp&-a+tp\\&\f{-a}{2}&1-sp&1-tp\end{matrix}\bigg|-1\bigg]\\
=&\f{\Gamma(1-sp)\Gamma(1-tp)}{\Gamma(1-a)\Gamma(1+a-sp-tp)}.
\end{align*}
Since $\alpha\in\Z_p^{\times}$ we have $a-1\in\{1,\ldots,p-2\}$. By \eqref{eulerreflection} we know that for any nonnegative integer $n$, $1/\Gamma(-n)$=0. Thus we obtain $\Omega(0,sp,tp)=0$.

Below we consider the case $s=0$. Assume that $rp,(r-t)p\in\Q\setminus\Z$. With the help of Lemma \ref{whipple} we arrive at
\begin{align*}
&{}_4F_3\bigg[\begin{matrix}-a+rp&1+\f{-a+rp}{2}&-a&-a+tp\\&\f{-a+rp}{2}&1+rp&1+rp-tp\end{matrix}\bigg|-1\bigg]_{p-1}\\
=&{}_4F_3\bigg[\begin{matrix}-a+rp&1+\f{-a+rp}{2}&-a&-a+tp\\&\f{-a}{2}&1+rp&1+rp-tp\end{matrix}\bigg|-1\bigg]\\
=&\f{\Gamma(1+rp)\Gamma(1+rp-tp)}{\Gamma(1-a+rp)\Gamma(1+a+rp-tp)}.
\end{align*}
By \eqref{padicgamma1} we have
$$
\f{\Gamma(1+rp)}{\Gamma(1-a+rp)}=(-1)^arp\cdot\f{\Gamma_p(1+rp)}{\Gamma_p(1-a+rp)}
$$
and
$$
\f{\Gamma(1+rp-tp)}{\Gamma(1+a+rp-tp)}=(-1)^a\cdot\f{\Gamma_p(1+rp-tp)}{\Gamma_p(1+a+rp-tp)}.
$$
Thus we obtain $\Omega(rp,0,tp)=0$. Symmetrically, we have $\Omega(rp,sp,0)=0$.
Combining the above we get \eqref{key2}. The proof of Theorem \ref{4F3(-1)analog} is now complete.\qed
\medskip
\section{Applications of Theorems \ref{4F3analog} and \ref{4F3(-1)analog}}
\setcounter{lemma}{0}
\setcounter{theorem}{0}
\setcounter{equation}{0}
\setcounter{conjecture}{0}
\setcounter{remark}{0}
In \cite[Conjecture 35]{SunZW19}, Sun posed many interesting conjectures, such as \eqref{sunintval1} and \eqref{sunintval2}. These two congruences look very challenging and cannot be solved by the method in this paper. Fortunately, our method can be used to solve several other conjectures. Here we list the conjectures that we shall prove in this section.
\begin{conjecture}\cite[Conjecture 35]{SunZW19}\label{sunconj1}
(i) Let $p>3$ be a prime. For any $x\in\Z_p$ with $3x\not\eq1,2\pmod{p}$ we have
\begin{equation}\label{sunconj1_1}
\sum_{k=0}^{p-1}(-1)^k(2k+1)\sum_{j=0}^k\binom{-x}{j}^3\binom{x-1}{k-j}^3\eq0\pmod{p^2}.
\end{equation}
For any $x\in\Z_p$ with $x\eq1/3\pmod{p}$, we have
\begin{equation}\label{sunconj1_2}
\sum_{k=0}^{p-1}(-1)^k(2k+1)\sum_{j=0}^k\binom{-x}{j}^3\binom{x-1}{k-j}^3\eq x+\f{p\l(\f{p}{3}\r)-1}{3}\pmod{p^2}.
\end{equation}
(ii) Let $p$ be an odd prime. If $p\not\eq5\pmod{8}$, then
\begin{equation}\label{sunconj1_3}
\sum_{k=0}^{p-1}(-1)^k(2k+1)\sum_{j=0}^{k}\binom{-1/4}{j}^3\binom{-3/4}{k-j}^3\eq p^2\pmod{p^3}.
\end{equation}
If $p\eq5,7\pmod{8}$, then
\begin{equation}\label{sunconj1_4}
\sum_{k=0}^{p-1}(2k+1)\sum_{j=0}^{k}\binom{-1/2}{j}^3\binom{-1/2}{k-j}^3\eq0\pmod{p^3}.
\end{equation}
\end{conjecture}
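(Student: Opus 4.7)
My plan is to invoke the reduction derived in Section~1 (see \eqref{sigma1sigma2}): for $m=3$, any $x\in\Z_p$, and $\varepsilon\in\{\pm1\}$,
$$
\sum_{k=0}^{p-1}\varepsilon^k(2k+1)\sum_{j=0}^k\binom{-x}{j}^3\binom{x-1}{k-j}^3\equiv(1-x)\Sigma_1+x\Sigma_2\pmod{p^3},
$$
where $\Sigma_1=F^{(1)}(1-x)\,F^{(0)}(x)$, $\Sigma_2=F^{(1)}(x)\,F^{(0)}(1-x)$, $F^{(1)}(\alpha)$ is the truncated ${}_4F_3$ appearing in Theorems~\ref{4F3analog}--\ref{4F3(-1)analog}, and $F^{(0)}(\alpha)={}_3F_2[\alpha,\alpha,\alpha;1,1;(-1)^3\varepsilon]_{p-1}$. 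Congruences \eqref{sunconj1_1}--\eqref{sunconj1_3} have $\varepsilon=-1$ (hypergeometric argument $+1$, so Theorem~\ref{4F3analog} governs $F^{(1)}$), while \eqref{sunconj1_4} has $\varepsilon=1$ (argument $-1$, covered by Theorem~\ref{4F3(-1)analog}).

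For part~(i), I would first evaluate $F^{(1)}(x)$ and $F^{(1)}(1-x)$ via Theorem~\ref{4F3analog}. Setting $a=\<-x\>_p$, we have $\<x-1\>_p=p-1-a$, of the same parity as $a$ since $p$ is odd. The hypothesis $3x\not\equiv 1,2\pmod p$ is precisely the condition that keeps $a$ and $p-1-a$ away from the case-boundaries $(p\pm1)/3,(2p\pm1)/3$ of Theorem~\ref{4F3analog}, so both ${}_4F_3$ factors lie unambiguously in one of the four cases, and the associated $s$-values satisfy $s_{\Sigma_1}=1-s_{\Sigma_2}$, yielding an explicit relation between the two ${}_4F_3$ evaluations. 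Next, I would rewrite the ${}_3F_2$ factors in $\Gamma_p$-form using the reflection identity \eqref{eulerpadic} and the $p$-adic analogue of the Gauss multiplication formula (Lemma~\ref{Gauss}). The involution $\alpha\leftrightarrow 1-\alpha$ then sends $\Sigma_1\leftrightarrow\Sigma_2$ up to factors that are exactly compensated by the prefactors $(1-x)$ and $x$, giving $(1-x)\Sigma_1+x\Sigma_2\equiv 0\pmod{p^2}$, which is \eqref{sunconj1_1}. For the boundary case $x\equiv 1/3\pmod p$, both $a$ and $p-1-a$ sit exactly at the case-boundaries of Theorem~\ref{4F3analog}: one of $F^{(1)}(x),F^{(1)}(1-x)$ has order $p$ while the other has order $p^2$, so only one of $x\Sigma_2,(1-x)\Sigma_1$ survives mod $p^2$, and tracking its leading $p$-order contribution (together with the sign $\jacob{p}{3}$ coming from the $p$-adic reflection) produces the explicit correction $x+(p\jacob{p}{3}-1)/3$ of \eqref{sunconj1_2}.

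For part~(ii), congruence~\eqref{sunconj1_3} ($x=1/4$) follows the same scheme with $\alpha=1/4,3/4$: Theorem~\ref{4F3analog} gives $F^{(1)}(1/4)$ and $F^{(1)}(3/4)$, the multiplication and reflection formulas collapse the resulting $\Gamma_p(k/8)$-products, and the parity of $p\pmod 8$ dictates which case applies at each $\alpha$ so as to produce $p^2$ exactly when $p\not\equiv 5\pmod 8$. For \eqref{sunconj1_4} ($x=1/2$), since $1-x=x$ we have $\Sigma_1=\Sigma_2=F^{(1)}(1/2)F^{(0)}(1/2)$ and the sum equals $\Sigma_1$. Theorem~\ref{4F3(-1)analog} at $\alpha=1/2$ gives $F^{(1)}(1/2)\equiv(-1)^{(p-1)/2}p\pmod{p^3}$ (using $\Gamma_p(1/2)^2=(-1)^{(p+1)/2}$), reducing the claim to the supercongruence
$$
{}_3F_2\bigg[\begin{matrix}\f12&\f12&\f12\\&1&1\end{matrix}\bigg|\ -1\bigg]_{p-1}\equiv 0\pmod{p^2}\quad(p\equiv 5,7\pmod 8),
$$
which is a known Van~Hamme-type result that I would either invoke or reprove directly.

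The main obstacle is the absence of a ready-made analogue of Theorem~\ref{4F3analog} for the factor $F^{(0)}(\alpha)={}_3F_2[\alpha,\alpha,\alpha;1,1;\pm1]_{p-1}$. Either it must be treated in combination with the ${}_4F_3$ factor so that the $\Gamma_p$-quotients telescope across the involution $\alpha\leftrightarrow 1-\alpha$, or it must be handled by an independent $p$-adic argument---via a Saalschütz-type manipulation at the boundary $x\equiv 1/3\pmod p$ (where $d+e=a+b+c+1$ forces the series into an essentially terminating form) and via a Van~Hamme-type supercongruence at $x=1/2$. The most delicate bookkeeping occurs at $x\equiv 1/3\pmod p$, where both $F^{(1)}$ and $F^{(0)}$ acquire extra $p$-divisibility and one must carefully extract the leading term with the correct $p$-adic sign to recover $(p\jacob{p}{3}-1)/3$.
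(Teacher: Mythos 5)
Your overall architecture matches the paper's: reduce via \eqref{sigma1sigma2} to $(1-x)\Sigma_1+x\Sigma_2$, evaluate the ${}_4F_3$ factors by Theorems \ref{4F3analog} and \ref{4F3(-1)analog}, and run a case analysis on $a=\<-x\>_p$ versus $p-1-a$. The genuine gap is your treatment of the factor ${}_3F_2[\alpha,\alpha,\alpha;1,1;\pm1]_{p-1}$. You correctly flag it as ``the main obstacle,'' but the workarounds you propose do not close it. The paper resolves it by quoting a $p$-adic Dixon-type evaluation of Pan--Tauraso--Wang (restated as Lemma \ref{ptw3F21}): ${}_3F_2[\alpha,\alpha,\alpha;1,1;1]_{p-1}$ equals a $p$-adic unit $g_p(\alpha)$ times $2$, $p(2-3s)$, $ps$, or $p^2s(1-3s)/2$ according to the parity of $\<-\alpha\>_p$ and its position relative to $p/3$ and $2p/3$ --- a four-case table exactly dual to that of Theorem \ref{4F3analog}. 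This input is indispensable and cannot be manufactured from the reflection formula \eqref{eulerpadic} and Gauss multiplication alone, since those only control the unit part and say nothing about the power of $p$ carried by the truncated sum.

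Relatedly, the mechanism you describe for \eqref{sunconj1_1} --- a cancellation between $(1-x)\Sigma_1$ and $x\Sigma_2$ induced by the involution $x\leftrightarrow 1-x$, ``compensated by the prefactors'' --- is not what happens and would not suffice: the prefactors $x$ and $1-x$ are $p$-adic units in the relevant range, so no such compensation can create divisibility by $p$. In the paper's proof each of $\Sigma_1$ and $\Sigma_2$ is \emph{separately} $\eq 0\pmod{p^2}$, because in every admissible configuration (the hypothesis $3x\not\eq 1,2\pmod p$ pushing $a$ and $p-1-a$ off the thresholds) the $p$-adic orders of the ${}_3F_2$ and ${}_4F_3$ factors are complementary: when one is a unit the other carries $p^2$, and when both carry $p$ the product still carries $p^2$. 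Without the exact order of the ${}_3F_2$ in each case this bookkeeping cannot be carried out. The same lemma produces the explicit unit value in \eqref{sunconj1_2} (where only $x\Sigma_2$ survives) and the two contributions $\f{3}{4}p^2+\f{1}{4}p^2=p^2$ in \eqref{sunconj1_3}. Your treatment of \eqref{sunconj1_4} is essentially the paper's, except that the final ingredient ${}_3F_2[\f12,\f12,\f12;1,1;-1]_{p-1}\eq 0\pmod{p^2}$ for $p\eq5,7\pmod 8$ is again imported from Pan--Tauraso--Wang (Lemma \ref{ptw3F2-1}) rather than reproved.
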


\begin{theorem}\label{appl1}
Conjecture \ref{sunconj1} is true.
\end{theorem}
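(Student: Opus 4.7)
The plan is to combine \eqref{sigma1sigma2} (specialized to $m=3$) with Theorems \ref{4F3analog} and \ref{4F3(-1)analog}. For each sum appearing on the left-hand side of \eqref{sunconj1_1}--\eqref{sunconj1_4}, \eqref{sigma1sigma2} yields a congruence to $(1-x)\Sigma_1+x\Sigma_2\pmod{p^3}$, where each $\Sigma_i$ is the product of a truncated ${}_4F_3$---of exactly the shape handled by Theorems \ref{4F3analog} and \ref{4F3(-1)analog}, with $\alpha\in\{1-x,x\}$---and a truncated ${}_3F_2[\alpha,\alpha,\alpha;1,1;\pm 1]_{p-1}$. The relevant sign $(-1)^m\varepsilon$ is $+1$ for \eqref{sunconj1_1}--\eqref{sunconj1_3} (where $\varepsilon=-1$), placing us in Theorem \ref{4F3analog}, and is $-1$ for \eqref{sunconj1_4} (where $\varepsilon=+1$), placing us in Theorem \ref{4F3(-1)analog}. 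The first step is to substitute these explicit $p$-adic gamma evaluations for the ${}_4F_3$ factors.

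The residual ${}_3F_2$ factors are handled next. For the generic case \eqref{sunconj1_1} only the modulo-$p$ value is required, and this can be read off from the $p$-adic Gauss/Dixon ${}_3F_2$ summation identities already recorded in \cite{MP,PTW,SunZW11b}. For \eqref{sunconj1_3} and \eqref{sunconj1_4} the specializations $x=1/4$ and $x=1/2$ produce exactly the sums covered by van Hamme's supercongruences \cite{vH}, now theorems due to Swisher \cite{Swisher15} and others; combining them with Gauss's multiplication formula (Lemma \ref{Gauss}) and the reflection identity \eqref{eulerpadic} yields the right-hand sides $p^2$ and $0$.

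For \eqref{sunconj1_1} the hypothesis $3x\not\equiv 1,2\pmod p$ guarantees that the factor $\Gamma_p((1-3\alpha)/2)$ appearing in $h_p(\alpha)$ is a $p$-adic unit for $\alpha\in\{x,1-x\}$; a direct comparison of the resulting $\Gamma_p$ products, exploiting \eqref{eulerpadic} and the $\alpha\leftrightarrow 1-\alpha$ symmetry, then forces $(1-x)\Sigma_1+x\Sigma_2$ to collapse to $0$ modulo $p^2$. In the singular case \eqref{sunconj1_2} the factor $\Gamma_p((1-3\alpha)/2)$ at $\alpha\equiv 1/3$ or $2/3\pmod p$ has argument in $p\Z_p$ and must be Taylor-expanded around $0$ via \eqref{padicgamma1}; the constant part contributes the ``$x$'' on the right-hand side, while the first-order correction (evaluated with $\Gamma_p(1/3)\Gamma_p(2/3)=\pm 1$ from \eqref{eulerpadic}) yields the remaining term displayed in \eqref{sunconj1_2}.

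The principal obstacle is managing the four-case structure of Theorem \ref{4F3analog} when it is applied simultaneously with $\alpha=1-x$ and with $\alpha=x$, which \emph{a priori} produces sixteen sub-cases for \eqref{sunconj1_1}. To avoid brute-force bookkeeping I would proceed as in Section 3 of this paper: once all hypergeometric factors are expanded, the expression $(1-x)\Sigma_1+x\Sigma_2$ becomes a polynomial function of $x\in\Z_p$ of bounded degree, so a final application of the Pan--Tauraso--Wang local-global principle \cite{PTW} reduces the verification to a handful of admissible hyperplanes, which can be checked uniformly.
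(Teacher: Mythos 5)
Your overall architecture---reduce via \eqref{sigma1sigma2} to $(1-x)\Sigma_1+x\Sigma_2$, evaluate the ${}_4F_3$ factors by Theorems \ref{4F3analog} and \ref{4F3(-1)analog}, and feed in known evaluations of the remaining ${}_3F_2$ factors---is exactly the paper's. But the execution has genuine gaps. The most serious concerns \eqref{sunconj1_1}: you assert that ``only the modulo-$p$ value'' of the ${}_3F_2$ factors is required. That is not enough. Writing $a=\<-x\>_p$, the complementary parameter is $\<x-1\>_p=p-1-a$ (same parity), and depending on where $a$ falls the ${}_4F_3$ factor is a unit times $p^0$, $p^1$ or $p^2$; in the configurations where it contributes at most one power of $p$ you must know that the accompanying ${}_3F_2[\alpha,\alpha,\alpha;1,1;1]_{p-1}$ is divisible by $p$ (indeed by $p^2$ in one configuration). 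This is exactly what \cite[Theorem 5.1]{PTW} (Lemma \ref{ptw3F21}) supplies, with a four-case structure whose parity convention is \emph{opposite} to that of Theorem \ref{4F3analog}; the two mesh under $a\mapsto p-1-a$ so that each product $\Sigma_i$ always acquires $p^2$, \emph{except} when $a$ lies in a three-element boundary set---and the hypothesis $3x\not\equiv1,2\pmod p$ exists precisely to exclude those residues. Your stated role for that hypothesis (making $\Gamma_p((1-3\alpha)/2)$ a $p$-adic unit) is vacuous, since $\Gamma_p$ maps $\Z_p$ into $\Z_p^\times$ unconditionally. Your proposed escape from the case bookkeeping via another application of the local-global theorem also fails: $(1-x)\Sigma_1+x\Sigma_2$ is not a polynomial in $x$, nor an admissible family in the sense of \cite{PTW}, because the case selection depends on the locally constant function $\<-x\>_p$. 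The paper simply carries out the (short, thanks to the complementarity) three-case analysis.

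Two further points. For \eqref{sunconj1_2} the term $\bigl(p\bigl(\frac{p}{3}\bigr)-1\bigr)/3$ does not arise from Taylor-expanding $\Gamma_p((1-3x)/2)$ about $0$ via \eqref{padicgamma1} (which is a functional equation, not an expansion tool); it comes from the prefactor $\alpha+\<-\alpha\>_p=sp$ in the relevant case of Theorem \ref{4F3analog}, the $\Gamma_p$ products collapsing to $\pm1$ by the reflection formula \eqref{eulerpadic}. And for \eqref{sunconj1_3}--\eqref{sunconj1_4} the needed ${}_3F_2$ inputs are not van Hamme/Swisher supercongruences: ${}_3F_2[\tfrac14,\tfrac14,\tfrac14;1,1;1]_{p-1}$, ${}_3F_2[\tfrac34,\tfrac34,\tfrac34;1,1;1]_{p-1}$ and ${}_3F_2[\tfrac12,\tfrac12,\tfrac12;1,1;-1]_{p-1}$ are evaluated by \cite[Theorem 5.1 and Corollary 8.1]{PTW} (Lemmas \ref{ptw3F21} and \ref{ptw3F2-1}), the last supplying the mod-$p^2$ vanishing for $p\equiv5,7\pmod 8$ that makes \eqref{sunconj1_4} close. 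Without these specific inputs and the divisibility bookkeeping above, the proposal does not yield the stated congruences.
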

\begin{remark}
(a) In fact, we could evaluate the sums in \eqref{sunconj1_1} and \eqref{sunconj1_2} modulo $p^3$, however, we shall not list them here since the mod $p^3$ results are very complicated.\\
(b) Sun also conjectured that for any odd prime $p\eq2\pmod{3}$,
$$
\sum_{k=0}^{p-1}(2k+1)\sum_{j=0}^{k}\binom{-1/3}{j}^3\binom{-2/3}{k-j}^3\eq0\pmod{p^3}.
$$
Via a similar discussion as the one in the proof of Theorem \ref{appl1}, one may find that it suffices to evaluate
$$
\sum_{k=0}^{p-1}\binom{-1/3}{k}^3\quad \t{and}\quad \sum_{k=0}^{p-1}\binom{-2/3}{k}^3
$$
modulo $p^2$.
\end{remark}

To show Theorem \ref{appl1} we need the following known results.
\begin{lemma}\cite[Theorem 5.1]{PTW}\label{ptw3F21}
Suppose that $p$ is an odd prime and $\alpha\in\Z_p^\times$.
Let $s=(\alpha+\langle-\alpha\rangle_p)/p$, and
$$g_p(\alpha)=\frac{\Gamma_p(1+\frac12\alpha)\Gamma_p(1-\frac32\alpha)}{\Gamma_p(1+\alpha)\Gamma_p(1-\alpha)\Gamma_p(1-\frac12\alpha)^2}.$$
Then the following congruence holds modulo $p^3$,
$$
{}_3F_2\bigg[\begin{matrix}
\alpha&\alpha&\alpha\\
&1&1
\end{matrix}\bigg|\,1\bigg]_{p-1}\equiv
\begin{cases}
\displaystyle 2g_p(\alpha)&\text{if $\langle-\alpha\rangle_p$ is even and $\langle-\alpha\rangle_p<2p/3$,}
\vspace{2mm}\\
\displaystyle p(2-3s)g_p(\alpha)&\text{if $\langle-\alpha\rangle_p$ is even and $\langle-\alpha\rangle_p\geq 2p/3$,}
\vspace{2mm}\\
\displaystyle ps g_p(\alpha)&\text{if $\langle-\alpha\rangle_p$ is odd and $\langle-\alpha\rangle_p<p/3$,}
\vspace{2mm}\\
\displaystyle \frac{p^2s(1-3s)g_p(\alpha)}{2}&\text{if $\langle-\alpha\rangle_p$ is odd and $\langle-\alpha\rangle_p\geq p/3$.}
\end{cases}
$$
\end{lemma}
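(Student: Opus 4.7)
The plan is to follow the same strategy used in Section 3 to prove Theorem \ref{4F3analog}, with Dixon's classical summation for well-poised ${}_3F_2$ series playing the role of Lemma \ref{4F3identity}. Dixon's theorem asserts that, whenever the series converges,
$$
{}_3F_2\bigg[\begin{matrix}\alpha&\beta&\gamma\\ &1+\alpha-\beta&1+\alpha-\gamma\end{matrix}\bigg|\,1\bigg]=\f{\Gamma(1+\f{\alpha}{2})\Gamma(1+\alpha-\beta)\Gamma(1+\alpha-\gamma)\Gamma(1+\f{\alpha}{2}-\beta-\gamma)}{\Gamma(1+\alpha)\Gamma(1+\f{\alpha}{2}-\beta)\Gamma(1+\f{\alpha}{2}-\gamma)\Gamma(1+\alpha-\beta-\gamma)},
$$
and the specialization $\beta=\gamma=\alpha$ recovers precisely the classical analogue of $g_p(\alpha)$. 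Writing $a=\<-\alpha\>_p$ and $\alpha=-a+sp$, I introduce for $x,y,z\in\Z_p$ the auxiliary function
\begin{align*}
\Phi(x,y,z):=&\,{}_3F_2\bigg[\begin{matrix}-a+x&-a+y&-a+z\\ &1+x-y&1+x-z\end{matrix}\bigg|\,1\bigg]_{p-1}\\
&-f(x,y,z)\cdot\f{\Gamma_p(1+\f{-a+x}{2})\Gamma_p(1+x-y)\Gamma_p(1+x-z)\Gamma_p(1+\f{3a+x-2y-2z}{2})}{\Gamma_p(1-a+x)\Gamma_p(1+\f{a+x-2y}{2})\Gamma_p(1+\f{a+x-2z}{2})\Gamma_p(1+a+x-y-z)},
\end{align*}
where $f(x,y,z)$ is a polynomial prefactor designed so that at $(x,y,z)=(sp,sp,sp)$ it specializes to the multiplier $2$, $p(2-3s)$, $ps$, or $p^2s(1-3s)/2$ in the four respective regimes. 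A direct check shows that the $p$-adic gamma ratio above specializes to $g_p(\alpha)$ at $(sp,sp,sp)$, so the target congruence becomes $\Phi(sp,sp,sp)\eq 0\pmod{p^3}$.

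For $p\geq 7$ (with $p=3,5$ verified numerically), the local-global theorem of \cite{PTW} reduces the problem to proving $\Phi(rp,sp,tp)\eq 0\pmod{p^3}$ on three admissible hyperplanes, for which I take $r=0$, $s=0$, and $t=0$. The hyperplanes $s=0$ and $t=0$ are equivalent by the $\beta\leftrightarrow\gamma$ symmetry of the ${}_3F_2$. On either, one numerator parameter equals $-a$, so the truncation at $p-1$ agrees with the full (terminating) series; Dixon's identity then applies verbatim, and each classical $\Gamma$ converts to the corresponding $\Gamma_p$ by a short finite product via \eqref{padicgamma1}, yielding $\Phi=0$ exactly. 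On the hyperplane $r=0$, I approximate $(s,t)$ by elements of $(\Q\sm\Z)\cap\Z_p$ and apply Dixon's identity again; however, the denominator factor $\Gamma(1-a+\cdots)$ then sits at a pole while $\Gamma(\f{1-a+\cdots}{2})$ in the numerator may also blow up, so a limit of the form $\lim_{z\to 0}\Gamma(\f{1-a+z}{2})/\Gamma(1-a+z)$ must be evaluated exactly as in the proof of Theorem \ref{4F3analog}, using the reflection formulas \eqref{eulerreflection} and \eqref{eulerpadic}. This limit supplies the remaining $\Gamma\leftrightarrow\Gamma_p$ conversion together with the correct prefactor $f(0,sp,tp)$.

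The principal obstacle is the case analysis in the $r=0$ step. According to the parity of $a$, the singular $\Gamma$-factor lies on one side or the other of Dixon's quotient, and a compensating factor of $p$ (or $p^2$) is produced by the limit precisely when $a$ crosses the threshold $p/3$ or $2p/3$, at which point an argument of $\Gamma_p(1+\alpha/2)$ or $\Gamma_p(1-3\alpha/2)$ enters $p\Z_p$. Accurately tracking the sign conventions and the exact $p$-power coming out of each reflection step, in each of the four regimes, is routine but delicate, and is what produces the four cases in the statement.
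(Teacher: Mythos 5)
The paper does not actually prove this lemma: it is imported verbatim as \cite[Theorem 5.1]{PTW}, so there is no internal proof to compare against. What you have written is a reconstruction of the method of \cite{PTW}, and it is the same method the paper itself uses to prove Theorem \ref{4F3analog}: replace the classical summation (here Dixon's theorem for the well-poised ${}_3F_2$, in place of Lemma \ref{4F3identity}) by a $p$-adic deformation $\Phi(x,y,z)$, verify the vanishing mod $p^3$ on the three hyperplanes $x=0$, $y=0$, $z=0$ by converting each classical $\Gamma$-ratio into a $\Gamma_p$-ratio times a short finite product, and invoke the local--global theorem for $p\geq 7$. This is the right route, and your identification of where the powers of $p$ come from (a term of the telescoping product entering $p\Z_p$ exactly when $a=\<-\alpha\>_p$ crosses $p/3$ or $2p/3$, with the parity of $a$ deciding which side of Dixon's quotient is singular) is the correct mechanism; it also explains why the parities in the four cases here are swapped relative to Theorem \ref{4F3analog}.

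Two points deserve tightening. First, the logic of the prefactor is stated backwards: $f(x,y,z)$ is not ``designed'' to give the four multipliers on the diagonal --- it is \emph{forced}, on each hyperplane, by the $\Gamma\to\Gamma_p$ conversion, and the substance of the argument is checking that one single polynomial (e.g.\ $f(x,y,z)=x\cdot\f{p+x-2y-2z}{2}$ in the regime $a$ odd, $a\geq p/3$) matches all three hyperplane computations simultaneously; only then does evaluating $f(sp,sp,sp)$ \emph{yield} $p^2s(1-3s)/2$ etc. Without this consistency check the local--global theorem does not apply, and this four-regime bookkeeping --- which you defer as ``routine but delicate'' --- is essentially the whole content of the lemma. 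Second, a small slip: the limit you need on the hyperplane $x=0$ is $\lim_{z\to0}\Gamma\l(1+\f{-a+z}{2}\r)/\Gamma(1-a+z)$, coming from the factor $\Gamma(1+\al/2)$ in Dixon's formula, not $\lim_{z\to0}\Gamma\l(\f{1-a+z}{2}\r)/\Gamma(1-a+z)$, which belongs to the ${}_4F_3$ identity of Lemma \ref{4F3identity}; the numerator here has a pole precisely when $a$ is even, which is why the ``clean'' value $2g_p(\al)$ occurs for $a$ even rather than $a$ odd. With these repairs the outline is a viable proof, matching in spirit what \cite{PTW} does.
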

\begin{lemma}\cite[Corollary 8.1]{PTW}\label{ptw3F2-1}
Let $p$ be an odd prime. If $p\equiv 1,3\pmod{8}$, then
$$
{}_3F_2\bigg[\begin{matrix}\frac12&\frac12&\frac12\\ &1&1\end{matrix}\bigg| -1\bigg]_{p-1}\equiv -\Gamma_p\bigg(\frac18\bigg)^2\Gamma_p\bigg(\frac38\bigg)^2\pmod{p^3}.
$$
If prime $p\equiv 5,7\pmod{8}$, then
$$
{}_3F_2\bigg[\begin{matrix}\frac12&\frac12&\frac12\\ &1&1\end{matrix}\bigg| -1\bigg]_{p-1}\equiv \frac{3p^2}{64}\cdot\Gamma_p\bigg(\frac18\bigg)^2\Gamma_p\bigg(\frac38\bigg)^2\pmod{p^3}.
$$
\end{lemma}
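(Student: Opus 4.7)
The plan is to follow the local-global strategy of \cite{PTW}: realize the lemma as the $(0,0,0)$-specialization of a three-parameter congruence that holds on enough hyperplanes to trigger Theorem~1.1 of \cite{PTW} modulo $p^3$. Concretely, I would consider the deformation
$$
F(x,y,z) := {}_3F_2\bigg[\begin{matrix}\tfrac12+xp & \tfrac12+yp & \tfrac12+zp\\ & 1 & 1\end{matrix}\bigg|-1\bigg]_{p-1},
$$
together with a companion expression $R_p(x,y,z)$ built from $\Gamma_p$-values, engineered so that $R_p(0,0,0)$ reproduces the conjectured right-hand side in each of the two congruence classes modulo $8$. Set $\Omega := F - R_p$, and aim to prove $\Omega\equiv 0\pmod{p^3}$ identically on $\Z_p^3$.

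By the local-global theorem (applicable for $p\ge 7 > \binom{4}{2}$), it then suffices to verify $\Omega \equiv 0 \pmod{p^3}$ on three admissible hyperplanes, say $x=0$, $y=0$, $z=0$. On each hyperplane one numerator parameter is exactly $\tfrac12$. Moreover, since $(\tfrac12)_k$ acquires a factor of $p$ as soon as $k\ge (p+1)/2$, the tail of the truncation contributes $O(p^3)$, so modulo $p^3$ the truncated $F$ may be replaced by the corresponding analytic ${}_3F_2$ at $-1$. The resulting two-parameter analytic series should be evaluated in closed form by chaining Whipple's quadratic transformation (which converts a well-poised ${}_3F_2[-1]$ into a ${}_3F_2[1]$) with Whipple's summation formula from Lemma~\ref{3F2trans}; the classical closed form is then rewritten in terms of $\Gamma_p$ using \eqref{padicgamma1} and \eqref{eulerpadic}, exactly as in the proofs of Theorems~\ref{padicgamma}, \ref{4F3analog}, and \ref{4F3(-1)analog}. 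The limit argument used to handle the hyperplane $r=0$ in the proof of Theorem~\ref{4F3analog} (passing from $\Q\setminus\Z$ through $p$-adic continuity to the integer point) is the template.

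At the central point $(0,0,0)$ this evaluation yields a product of the form $\Gamma_p(\tfrac18)\Gamma_p(\tfrac38)\Gamma_p(\tfrac58)\Gamma_p(\tfrac78)$, which by Euler's reflection \eqref{eulerpadic} collapses up to sign to $\Gamma_p(\tfrac18)^2\Gamma_p(\tfrac38)^2$. The dichotomy between $p\equiv 1,3\pmod 8$ and $p\equiv 5,7\pmod 8$ emerges from Gauss's multiplication formula (Lemma~\ref{Gauss}) applied to the classical closed form: depending on $p\bmod 8$, the translations $\Gamma\bigl(\tfrac{2j+1}{8}\pm\tfrac{p}{8}\bigr)\to \Gamma_p(\cdot)$ via \eqref{padicgamma1} either produce no powers of $p$ at all, or produce exactly two of them, which combine to yield the prefactor $\tfrac{3p^2}{64}$ in the second case. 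This bookkeeping exactly parallels the case analysis carried out for $\sum_{k=0}^{p-1}A_k$ at the end of Section~2.

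The principal obstacle is therefore the $p$-adic valuation bookkeeping at $(0,0,0)$: ensuring that signs and powers of $p$ come out correctly under Gauss multiplication and Euler reflection in each of the two congruence classes. A secondary difficulty is choosing the parametrization so that Whipple's quadratic transformation can be applied on each of the three hyperplanes and so that $R_p$ can be written down explicitly enough for the hyperplane-wise comparison; one may need to use a slightly different three-parameter family (for example with the denominators $1,1$ replaced by $1+yp,1+zp$) in order to preserve the well-poised structure $d_i = 1+a-b_i$ under the deformation, at the cost of a minor modification of $R_p$.
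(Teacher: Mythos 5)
You cannot really be compared against ``the paper's own proof'' here, because the paper gives none: Lemma~\ref{ptw3F2-1} is imported verbatim as \cite[Corollary 8.1]{PTW} and is used as a black box. So what follows is an assessment of your reconstruction on its own merits.

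The overall strategy (local--global theorem plus exact classical summations on coordinate hyperplanes plus translation into $\Gamma_p$) is indeed the right one, but your parametrization contains a structural error that would sink the argument as written. In the proofs of Theorems~\ref{4F3analog} and \ref{4F3(-1)analog} the deformation is centered at $-a$ with $a=\langle-\alpha\rangle_p$, i.e.\ at the \emph{negative integer} $-(p-1)/2$ when $\alpha=\tfrac12$, and the target point is $(sp,sp,sp)$ with $s=(\alpha+a)/p=\tfrac12$, which lies \emph{off} the three hyperplanes. This is essential for two reasons. First, on the hyperplane $x=0$ the undeformed upper parameter equals $-(p-1)/2$, so the series terminates at $k=(p-1)/2$ and the truncated sum literally \emph{is} the full classical series, to which the exact identity applies; your series ${}_3F_2[\tfrac12,\tfrac12+yp,\tfrac12+zp;1,1;-1]$ does not terminate, and your claim that the truncated sum can be replaced by the analytic ${}_3F_2$ at $-1$ modulo $p^3$ is false --- the terms with $(p+1)/2\le k\le p-1$ are indeed $O(p^3)$, but the terms with $k\ge p$ are not $p$-adically small (the factors $(1)_k$ in the denominator also pick up powers of $p$), and the analytic series at $-1$ is not even $p$-adically convergent. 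Second, with your centering the target point is $(0,0,0)$, which lies \emph{on} all three hyperplanes; the local--global theorem then propagates nothing, and you are back to evaluating the original truncated sum directly, which is the whole problem.

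Two further gaps. (a) The well-poised structure must be preserved under deformation (denominators $1+xp-yp$, $1+xp-zp$ rather than $1,1$); you mention this only as an afterthought, but without it none of the classical $-1$-summations applies on the hyperplanes, and with it the companion function $R_p$ and all three hyperplane computations change. (b) A well-poised ${}_3F_2(-1)$ has no single Gamma-quotient closed form (unlike the ${}_4F_3$ of Lemma~\ref{whipple}, whose extra pair $(1+\tfrac{\alpha}{2})/(\tfrac{\alpha}{2})$ inserts the weight $1+2k/\alpha$); you would need either the terminating well-poised ${}_3F_2(-1)$ summation or an explicit quadratic transformation landing in the hypotheses $a+b=1$, $e+f=2c+1$ of Lemma~\ref{3F2trans}, and you must check that these hypotheses are met identically in the two free parameters remaining on each hyperplane, not just at the central point. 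The appearance of $\Gamma_p(\tfrac18)^2\Gamma_p(\tfrac38)^2$ and of the constant $\tfrac{3}{64}$ does plausibly come out of Whipple's ${}_3F_2(1)$ sum plus Gauss multiplication and reflection, as in the proof of Theorem~\ref{padicgamma}, but that bookkeeping is downstream of the issues above.
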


\medskip
\noindent{\it Proof of Theorem \ref{appl1}}. Recall that in Section 1, we obtained that
\begin{equation}\label{sigma12}
\sum_{k=0}^{p-1}\varepsilon^k(2k+1)\sum_{j=0}^k\binom{-x}{j}^3\binom{x-1}{k-j}^3\eq(1-x)\Sigma_1+x\Sigma_2\pmod{p^3},
\end{equation}
where
$$
\Sigma_1:={}_3F_2\bigg[\begin{matrix}x&x&x\\ &1&1\end{matrix}\bigg|-\varepsilon\bigg]_{p-1}\cdot{}_4F_3\bigg[\begin{matrix}1-x&1+\f{1-x}{2}&1-x&1-x\\ &\f{1-x}{2}&1&1\end{matrix}\bigg|-\varepsilon\bigg]_{p-1}
$$
and
$$
\Sigma_2:={}_3F_2\bigg[\begin{matrix}1-x&1-x&1-x\\ &1&1\end{matrix}\bigg|-\varepsilon\bigg]_{p-1}\cdot{}_4F_3\bigg[\begin{matrix}x&1+\f{x}{2}&x&x\\ &\f{x}{2}&1&1\end{matrix}\bigg|-\varepsilon\bigg]_{p-1}
$$

(i) Denote $\<-x\>_p$ by $a$. We first consider \eqref{sunconj1_1}. Here we only prove the case that $a$ is even since the other case can be confirmed similarly. If $a$ is even, then $\<x-1\>_p=p-1-a$ is even. Obviously, \eqref{sunconj1_1} holds if $p\mid a$. So we assume that $3x\not\eq0,1,2\pmod{p}$. Below we divide this case into three subcases.

\medskip
\noindent{\it Case 1}. $(p+1)/3\leq a<2p/3$.\\
In this case, $p/3-1<\<x-1\>_p\leq(2p-4)/3$. Since $3x\not\eq1,2\pmod{p}$ and $p>3$, we know that $\<x-1\>_p\neq(p-2)/3,(p-1)/3,p/3$. Thus $(p+1)/3\leq\<x-1\>_p\leq(2p-4)/3$. Then \eqref{sunconj1_1} follows from Theorem \ref{4F3analog} and Lemma \ref{ptw3F21}.

\medskip
\noindent{\it Case 2}. $a<(p+1)/3$.\\
Now $\<x-1\>_p>(2p-4)/3$. Since $3x\not\eq0,1,2\pmod{p}$ we have $\<x-1\>_p\neq(2p-3)/3,(2p-2)/3,(2p-1)/3$, and then $\<x-1\>_p\geq2p/3$. Then \eqref{sunconj1_1} follows again.

\medskip
\noindent{\it Case 3}. $a\geq2p/3$.\\
Clearly, $\<x-1\>_p\leq p/3-1$. Then \eqref{sunconj1_1} follows from Theorem \ref{4F3analog} and Lemma \ref{ptw3F21} immediately.

\medskip
Now we turn to \eqref{sunconj1_2}. We first assume that $p\eq1\pmod{6}$. In this case $a=(p-1)/3$ is even and $\<x-1\>_p=p-1-a=(2p-2)/3$. Thus by Theorem \ref{4F3analog} and Lemma \ref{ptw3F21} we have
$$
(1-x)\Sigma_1\eq0\pmod{p^2}
$$
and
\begin{align*}
x\Sigma_2\eq& 2x(x+a)\cdot\f{\Gamma_p\l(1+\f{1-x}{2}\r)\Gamma_p\l(1-\f{3(1-x)}{2}\r)\Gamma_p\l(\f{1+x}{2}\r)\Gamma_p\l(\f{1-3x}{2}\r)}{\Gamma_p(2-x)\Gamma_p(x)\Gamma_p\l(1-\f{1-x}{2}\r)^2\Gamma_p(1+x)\Gamma_p(1-x)\Gamma_p\l(\f{1-x}{2}\r)^2}\\
=&(x+a)\Gamma_p\l(\f{1-x}{2}\r)\Gamma_p\l(\f{1+x}{2}\r)\Gamma_p\l(\f{3x-1}{2}\r)\Gamma_p\l(\f{3-3x}{2}\r)\\
=&(x+a)(-1)^{\f{2(p-1)}{3}}=x+a=x+\f{p-1}{3}.
\end{align*}
Thus \eqref{sunconj1_2} follows from the fact that $\l(\f{p}{3}\r)=1$ for $p\eq1\pmod{6}$.

Below we suppose that $p\eq5\pmod{6}$. Now $a=(2p-1)/3$ is odd and $\<x-1\>_p=p-1-a=(p-2)/3$. Also, by Theorem \ref{4F3analog} and Lemma \ref{ptw3F21} we obtain
$$
(1-x)\Sigma_1\eq0\pmod{p^2}
$$
and
\begin{align*}
x\Sigma_2\eq& 2x(1-x+\<x-1\>_p)\cdot\f{\Gamma_p\l(1+\f{1-x}{2}\r)\Gamma_p\l(1-\f{3(1-x)}{2}\r)\Gamma_p\l(\f{1+x}{2}\r)\Gamma_p\l(\f{1-3x}{2}\r)}{\Gamma_p(2-x)\Gamma_p(x)\Gamma_p\l(1-\f{1-x}{2}\r)^2\Gamma_p(1+x)\Gamma_p(1-x)\Gamma_p\l(\f{1-x}{2}\r)^2}\\
=&(p-a-x)\Gamma_p\l(\f{1-x}{2}\r)\Gamma_p\l(\f{1+x}{2}\r)\Gamma_p\l(\f{3x-1}{2}\r)\Gamma_p\l(\f{3-3x}{2}\r)\\
=&(p-a-x)(-1)^{\f{p-2}{3}}=x+\f{-p-1}{3}.
\end{align*}
Noting that $\l(\f{p}{3}\r)=-1$ provided $p\eq5\pmod{6}$, \eqref{sunconj1_2} is concluded.

\medskip
(ii) We now consider \eqref{sunconj1_3}. We just prove the case $p\eq1\pmod{8}$ since the remaining cases are very similar. In this case, by Theorem \ref{4F3analog} and Lemma \ref{ptw3F21} we obtain that
\begin{gather*}
{}_3F_2\bigg[\begin{matrix}\f14&\f14&\f14\\ &1&1\end{matrix}\bigg|\ 1\bigg]_{p-1}\eq\f{2\Gamma_p\l(\f98\r)\Gamma_p\l(\f58\r)}{\Gamma_p\l(\f54\r)\Gamma_p\l(\f34\r)\Gamma_p\l(\f78\r)^2},\quad{}_3F_2\bigg[\begin{matrix}\f34&\f34&\f34\\ &1&1\end{matrix}\bigg|\ 1\bigg]_{p-1}\eq\f{-\f{p}{4}\cdot\Gamma_p\l(\f{11}8\r)\Gamma_p\l(-\f18\r)}{\Gamma_p\l(\f74\r)\Gamma_p\l(\f14\r)\Gamma_p\l(\f58\r)^2},\\
{}_4F_3\bigg[\begin{matrix}\f14&\f98&\f14&\f14\\ &\f18&1&1\end{matrix}\bigg|\ 1\bigg]_{p-1}\eq\f{p}{4}\cdot\f{\Gamma_p\l(\f58\r)\Gamma_p\l(\f18\r)}{\Gamma_p\l(\f54\r)\Gamma_p\l(\f34\r)\Gamma_p\l(\f38\r)^2},\\
{}_4F_3\bigg[\begin{matrix}\f34&\f{11}8&\f34&\f34\\ &\f38&1&1\end{matrix}\bigg|\ 1\bigg]_{p-1}\eq-\f{15p^2}{16}\cdot\f{\Gamma_p\l(\f78\r)\Gamma_p\l(-\f58\r)}{\Gamma_p\l(\f74\r)\Gamma_p\l(\f14\r)\Gamma_p\l(\f18\r)^2}.
\end{gather*}
Then by \eqref{eulerpadic} and \eqref{sigma12} we deduce that
$$
\sum_{k=0}^{p-1}(-1)^k(2k+1)\sum_{j=0}^{k}\binom{-1/4}{j}^3\binom{-3/4}{k-j}^3\eq\f{3}{4}\cdot p^2+\f{1}{4}\cdot p^2=p^2\pmod{p^3}.
$$
This proves \eqref{sunconj1_3}.

Below we turn to show \eqref{sunconj1_4}. By \eqref{sigma12} we have
\begin{align*}
&\sum_{k=0}^{p-1}(2k+1)\sum_{j=0}^{k}\binom{-1/2}{j}^3\binom{-1/2}{k-j}^3\\
\eq&{}_3F_2\bigg[\begin{matrix}\frac12&\frac12&\frac12\\ &1&1\end{matrix}\bigg| -1\bigg]_{p-1}\cdot{}_4F_3\bigg[\begin{matrix}\frac12&\f54&\frac12&\frac12\\ &\f14&1&1\end{matrix}\bigg| -1\bigg]_{p-1}\pmod{p^3}.
\end{align*}
By Theorem \ref{4F3(-1)analog} we know that
$$
{}_4F_3\bigg[\begin{matrix}\frac12&\f54&\frac12&\frac12\\ &\f14&1&1\end{matrix}\bigg| -1\bigg]_{p-1}\eq0\pmod{p}.
$$
In view of Lemma \ref{ptw3F2-1}, for $p\eq5,7\pmod{8}$ we have
$$
{}_3F_2\bigg[\begin{matrix}\frac12&\frac12&\frac12\\ &1&1\end{matrix}\bigg| -1\bigg]_{p-1}\eq0\pmod{p^2}.
$$
Combining the above, \eqref{sunconj1_4} follows immediately.

The proof of Theorem \ref{appl1} is now complete.\qed

\medskip
\section{Some more solvable conjectures}
\setcounter{lemma}{0}
\setcounter{theorem}{0}
\setcounter{equation}{0}
\setcounter{conjecture}{0}
\setcounter{remark}{0}
In the previous two sections, we established $p$-adic analogues of two hypergeometric identities and used them to solve some congruences conjectured by Sun. In fact, in \cite{SunZW19}, there are some more conjectures can be solved by some known results. Here we give a collection of them.
\begin{conjecture}\cite[Conjecture 36]{SunZW19}\label{sunconj}
{\rm (i)} For each prime $p>3$, we have
\begin{equation}\label{sunconj2_1}
\sum_{k=0}^{p-1}(2k+1)\sum_{j=0}^{k}\binom{-1/6}{j}^4\binom{-5/6}{k-j}^4\eq0\pmod{p^2}.
\end{equation}

{\rm (ii)} Let $p$ be an odd prime. If $p\eq3\pmod{4}$, then
\begin{equation}\label{sunconj2_2}
\sum_{k=0}^{p-1}(2k+1)\sum_{j=0}^{k}\binom{-1/4}{j}^4\binom{-3/4}{k-j}^4\eq0\pmod{p^2},
\end{equation}
and
\begin{equation}\label{sunconj2_3}
\sum_{k=0}^{p-1}(2k+1)\sum_{j=0}^{k}\binom{-1/2}{j}^5\binom{-1/2}{k-j}^5\eq0\pmod{p^3}.
\end{equation}
If $p\eq5\pmod{6}$, then
\begin{equation}\label{sunconj2_4}
\sum_{k=0}^{p-1}(2k+1)\sum_{j=0}^{k}\binom{-1/6}{j}^6\binom{-5/6}{k-j}^6\eq0\pmod{p^2}.
\end{equation}
\end{conjecture}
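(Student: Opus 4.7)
The plan is to apply the Section~1 reduction \eqref{sigma1sigma2} with $\varepsilon=1$ to each of \eqref{sunconj2_1}--\eqref{sunconj2_4}, which rewrites each left-hand side as
$$
(1-x)\Sigma_1+x\Sigma_2 \pmod{p^m},
$$
where $\Sigma_i$ is a product of a truncated ${}_{m+1}F_m$ by a truncated ${}_{m}F_{m-1}$, both at $z=(-1)^m$, with parameters drawn from $\{x,\,1-x,\,1+\tfrac{x}{2},\,\tfrac{x}{2},\ldots\}$. Since $m\geq 4$ in every instance, the ambient modulus $p^m$ comfortably exceeds $p^2$ and $p^3$, so the problem collapses to proving the appropriate $p$-adic divisibility of each higher-order hypergeometric factor at the specific $x\in\{1/6,1/4,1/2\}$ and the stated residue class of $p$.

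For \eqref{sunconj2_1}, \eqref{sunconj2_2}, and \eqref{sunconj2_4} the target modulus is $p^2$ and the hypergeometric argument is $z=1$. In each case the constraint on $p$ forces $\<-x\>_p$ and $\<-(1-x)\>_p$ into specific ranges where a higher-order analogue of Theorem \ref{4F3analog} (provable in essentially the same way, by combining a classical well-poised ${}_{m+1}F_m$ identity in the spirit of Lemma \ref{4F3identity} with the local-global theorem of \cite{PTW}) supplies each ${}_{m+1}F_m[\cdot\,;\,1]_{p-1}$ factor as a $\Gamma_p$-quotient multiplied by a power of $p$ of order at least one. Multiplying by the units $1-x$ and $x$ then yields the required $p^2$. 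The main routine step here is verifying via \eqref{eulerpadic} that the $\Gamma_p$-quotients in the closed forms do not accidentally cancel the leading $p$ before $(1-x)\Sigma_1$ and $x\Sigma_2$ are combined.

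Case \eqref{sunconj2_3} is the most delicate: $m=5$, $z=-1$, and the target is $p^3$. The $x\leftrightarrow 1-x$ symmetry at $x=1/2$ forces $\Sigma_1=\Sigma_2$, so the sum is $\equiv\Sigma_1\pmod{p^5}$; hence it suffices to show $\Sigma_1\equiv 0\pmod{p^3}$. The factor $\Sigma_1$ splits as
$$
{}_6F_5\bigg[\begin{matrix}\f12&\f54&\f12&\f12&\f12&\f12\\ &\f14&1&1&1&1\end{matrix}\bigg|\,{-1}\bigg]_{p-1}\cdot{}_5F_4\bigg[\begin{matrix}\f12&\f12&\f12&\f12&\f12\\ &1&1&1&1\end{matrix}\bigg|\,{-1}\bigg]_{p-1}.
$$
For $p\equiv 3\pmod 4$ the second factor is the classical van Hamme (H.3) supercongruence, divisible by $p^3$ by Swisher \cite{Swisher15} (compare also Liu \cite{Liu17}). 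It then suffices to check that the first factor is a $p$-adic integer, which follows from the well-poised structure of the ${}_6F_5$ exactly as in the proof of Theorem \ref{4F3(-1)analog}.

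The main obstacle is bookkeeping rather than new machinery: (i) locating and correctly citing the precise higher-order supercongruences from \cite{PTW, LoRa16, MP, Swisher15, Liu17} applicable in each case; (ii) for \eqref{sunconj2_3}, bridging the truncation-at-$(p-1)$ needed here with the truncation-at-$(p-1)/2$ that typically appears in the van Hamme literature (higher-index terms contain a central binomial coefficient $\binom{2k}{k}$ for some $k\geq(p+1)/2$, hence are divisible by $p$ and contribute nothing modulo the required power); and (iii) checking that the $p$-adic orders combine correctly across the disjunctive branches of the higher-order evaluations when $x\in\{1/6,1/4\}$, particularly in \eqref{sunconj2_4} where the $m=6$ reduction produces several such branches per factor that must be reconciled.
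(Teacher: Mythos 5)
Your reduction via \eqref{sigma1sigma2} and the overall plan (control each hypergeometric factor separately) match the paper's strategy, but there are two concrete gaps. First, for \eqref{sunconj2_1}, \eqref{sunconj2_2} and \eqref{sunconj2_4} you only account for the well-poised ${}_{m+1}F_m$ factors, and you claim "a power of $p$ of order at least one" from each, after which "multiplying by the units $1-x$ and $x$ yields the required $p^2$." Multiplying by units does not raise $p$-adic valuation, and each of $\Sigma_1,\Sigma_2$ contains exactly one ${}_{m+1}F_m$ factor, so order one is not enough. In fact the Dougall-type evaluation (the paper invokes \cite[Theorem 2.22]{MP}) gives $p^2$ for only one of the two $\Sigma_i$ (the one whose ${}_{m+1}F_m$ has $\<-\alpha\>_p>p/2$); the other ${}_{m+1}F_m$ is divisible by $p$ only, and the missing factor of $p$ must come from the companion ${}_mF_{m-1}$ series with all upper parameters equal (e.g.\ ${}_4F_3\l[\f56,\f56,\f56,\f56;1,1,1\,\big|\,1\r]_{p-1}\eq0\pmod p$). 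The paper obtains this from the Karlsson--Minton summation \eqref{KMidentity} after shifting the parameters by multiples of $p$; your proposal never addresses these non-well-poised factors at all, so one of the two terms $(1-x)\Sigma_1$, $x\Sigma_2$ is left with only a single power of $p$.

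Second, in \eqref{sunconj2_3} you attribute the $p^3$ divisibility to the wrong factor. The unweighted ${}_5F_4\l[\f12,\f12,\f12,\f12,\f12;1,1,1,1\,\big|\,{-1}\r]_{p-1}$ is not a van Hamme supercongruence (there is no ``(H.3)''), and it is not known, nor needed, to vanish modulo $p^3$; the paper only uses that it lies in $\Z_p$, which is immediate. The $p^3$ comes from the well-poised ${}_6F_5$ factor with the $(4k+1)$ weight, i.e.\ van Hamme's (A.2), for which Liu \cite{Liu19} proved the evaluation $-\f{p^3}{16}\Gamma_p\l(\f14\r)$ modulo $p^4$ when $p\eq3\pmod4$. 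Because $\Sigma_1=\Sigma_2$ here by the $x\leftrightarrow1-x$ symmetry, swapping the roles of the two factors repairs this case, but as written your key citation points at a statement that does not exist.
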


\begin{theorem}
Conjecture \ref{sunconj} is true.
\end{theorem}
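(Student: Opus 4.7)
The proof proceeds case by case, following exactly the template used in the proof of Theorem \ref{appl1}. For each of the four congruences in Conjecture \ref{sunconj}, the plan is first to apply the decomposition \eqref{sigma1sigma2} with the appropriate data: $(x,m,\varepsilon)=(1/6,4,1)$ for \eqref{sunconj2_1}, $(1/4,4,1)$ for \eqref{sunconj2_2}, $(1/2,5,1)$ for \eqref{sunconj2_3}, and $(1/6,6,1)$ for \eqref{sunconj2_4}. This rewrites each sum, modulo $p^m$, as $(1-x)\Sigma_1+x\Sigma_2$, where $\Sigma_1$ and $\Sigma_2$ are explicit products of a truncated ${}_{m+1}F_m$ and a truncated ${}_m F_{m-1}$ evaluated at $z=(-1)^m\varepsilon\in\{\pm 1\}$.

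Once this decomposition is in place, the substantive step is to evaluate each hypergeometric factor modulo the required power of $p$ by quoting an appropriate known supercongruence. For \eqref{sunconj2_1}, \eqref{sunconj2_2} and \eqref{sunconj2_4} the series are evaluated at $z=+1$, and under the stated congruence on $p\pmod 6$ or $p\pmod 4$ the residue $\<-\alpha\>_p$ for $\alpha\in\{1/6,5/6\}$ or $\{1/4,3/4\}$ falls into the ``upper'' range appearing in Lemma \ref{ptw3F21} (and in its higher-order generalisations due to Long--Ramakrishna \cite{LoRa16} and Pan--Tauraso--Wang \cite{PTW}); this forces at least one factor in each of $\Sigma_1$ and $\Sigma_2$ to be divisible by a suitable power of $p$, so that the combination $(1-x)\Sigma_1+x\Sigma_2$ vanishes modulo $p^2$. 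For \eqref{sunconj2_3} the choice $x=1/2$ gives $1-x=x$, so $\Sigma_1=\Sigma_2$ and the sum collapses to a single product of a truncated ${}_6F_5$ and a truncated ${}_5F_4$ at $-1$ with all upper parameters equal to $1/2$; here one invokes the Van Hamme (H.2)-type supercongruence proved by Swisher \cite{Swisher15}, which under $p\equiv3\pmod 4$ already supplies $p^3$-divisibility of one factor.

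The main obstacle is careful bookkeeping rather than new mathematics: for each sub-case one has to verify the exact $p$-adic valuations of the $\Gamma_p$-products that appear after applying the quoted supercongruences, and check that the asymmetric combination $(1-x)\Sigma_1+x\Sigma_2$ actually vanishes to the required order rather than one order lower. No new $p$-adic identity beyond those established in Theorem \ref{4F3analog}, Theorem \ref{4F3(-1)analog}, and the cited literature is required; the entire proof amounts to matching, for each of the four assertions of Conjecture \ref{sunconj}, the correct external supercongruence to each factor produced by \eqref{sigma1sigma2}.
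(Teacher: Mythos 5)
Your overall skeleton matches the paper's: apply \eqref{sigma1sigma2} with exactly the parameters you list, then dispose of each hypergeometric factor by a known supercongruence. Your treatment of \eqref{sunconj2_3} is essentially the paper's (the paper quotes Liu's strengthening of van Hamme's ${}_6F_5(-1)$ supercongruence --- this is the (A.2)-type, not (H.2)-type, sum --- to get $p^3$-divisibility of one factor, which is all that is needed). But for the three congruences at $z=1$ there is a concrete gap. Lemma \ref{ptw3F21} only evaluates ${}_3F_2(1)$, i.e.\ the $m=2$ situation; for $m=4$ and $m=6$ the factors produced by \eqref{sigma1sigma2} are a well-poised-type ${}_{m+1}F_m(1)$ and a plain ${}_mF_{m-1}(1)$ with all upper parameters equal to $\alpha$ and all lower parameters equal to $1$, and neither Lemma \ref{ptw3F21} nor the Long--Ramakrishna results you invoke cover these. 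The paper uses Mao--Pan's Theorems 2.17, 2.20 and 2.22 for the well-poised factors, and --- this is the ingredient your sketch is missing entirely --- the Karlsson--Minton summation formula \eqref{KMidentity} to show that the ${}_mF_{m-1}(1)$ factor with parameter in the ``lower'' range (e.g.\ ${}_4F_3[\frac56,\frac56,\frac56,\frac56;1,1,1\,|\,1]_{p-1}$ when $p\equiv1\pmod 6$) vanishes mod $p$.

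This omission is not cosmetic, because your divisibility bookkeeping is also too optimistic: it is not true that ``at least one factor'' in each of $\Sigma_1,\Sigma_2$ carries the full $p^2$. In, say, \eqref{sunconj2_1} with $p\equiv1\pmod6$, the well-poised ${}_5F_4$ with parameter $5/6$ is $\equiv0\pmod{p^2}$, which kills $\Sigma_1$ outright; but in $\Sigma_2$ the well-poised ${}_5F_4$ with parameter $1/6$ is divisible by $p$ only to the first order, so one must separately prove that its companion ${}_4F_3(1)$ factor is also $\equiv0\pmod p$ and multiply the two contributions. Without the Karlsson--Minton step (or an equivalent evaluation of the repeated-parameter ${}_mF_{m-1}(1)$ series), the argument for $\Sigma_2$ does not close, and the same issue recurs in \eqref{sunconj2_2} and \eqref{sunconj2_4}.
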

\begin{proof}
We first illustrate that \eqref{sunconj2_3} holds. By \eqref{sigma1sigma2}, we know that
\begin{align*}
&\sum_{k=0}^{p-1}(2k+1)\sum_{j=0}^{k}\binom{-1/2}{j}^5\binom{-1/2}{k-j}^5\\
\eq&{}_6F_5\bigg[\begin{matrix}\f12&\f54&\f12&\f12&\f12&\f12\\&\f14&1&1&1&1\end{matrix}\bigg|-1\bigg]_{p-1}\cdot{}_5F_4\bigg[\begin{matrix}\f12&\f12&\f12&\f12&\f12\\&1&1&1&1\end{matrix}\bigg|-1\bigg]_{p-1}\pmod{p^5}.
\end{align*}
In \cite{vH}, van Hamme ever conjectured that for any odd prime $p\eq3\pmod{4}$
$$
{}_6F_5\bigg[\begin{matrix}\f12&\f54&\f12&\f12&\f12&\f12\\&\f14&1&1&1&1\end{matrix}\bigg|-1\bigg]_{p-1}\eq0\pmod{p^3}.
$$
This was confirmed by Liu in \cite{Liu19} by establishing its generalization: For $p\geq5$ with $p\eq3\pmod{4}$
$$
{}_6F_5\bigg[\begin{matrix}\f12&\f54&\f12&\f12&\f12&\f12\\&\f14&1&1&1&1\end{matrix}\bigg|-1\bigg]_{p-1}\eq-\f{p^3}{16}\Gamma_p\l(\f14\r)\pmod{p^4}.
$$
Note that Liu conjectured that the above congruence also holds modulo $p^5$ which has been confirmed by Wang \cite{Wang}. In view of these, \eqref{sunconj2_3} holds evidently.

Now we prove \eqref{sunconj2_1}. Assume $p\eq1\pmod{6}$. It follows that $\<-1/6\>_p=(p-1)/6<p/2$ and $\<-5/6\>_p=(5p-5)/6>p/2$. Now by \cite[Theorem 2.22]{MP}, we obtain that
$$
{}_5F_4\bigg[\begin{matrix}\f16&\f{13}{12}&\f16&\f16&\f16\\&\f1{12}&1&1&1\end{matrix}\bigg|\ 1\bigg]_{p-1}\eq0\pmod{p}\quad\t{and}\quad {}_5F_4\bigg[\begin{matrix}\f56&\f{17}{12}&\f56&\f56&\f56\\&\f5{12}&1&1&1\end{matrix}\bigg|\ 1\bigg]_{p-1}\eq0\pmod{p^2}.
$$
By \eqref{sigma1sigma2}, it suffices to show that
$$
{}_4F_3\bigg[\begin{matrix}\f56&\f56&\f56&\f56\\&1&1&1\end{matrix}\bigg|\ 1\bigg]_{p-1}\eq0\pmod{p}.
$$
We need the well-known Karlsson-Minton summation formula (cf. \cite[Page 18]{GR}):
\begin{equation}\label{KMidentity}
{}_{r+1}F_r\bigg[\begin{matrix}a&b_1+m_1&\cdots&b_r+m_r\\&b_1&\cdots&b_r\end{matrix}\bigg|\ 1\bigg]=0,
\end{equation}
provided that $m_1,m_2,\ldots,m_r$ are nonnegative integers and $\Re(-a)>m_1+\cdots+m_r$. By \eqref{KMidentity}, we have
$$
{}_4F_3\bigg[\begin{matrix}\f56&\f56&\f56&\f56\\&1&1&1\end{matrix}\bigg|\ 1\bigg]_{p-1}\eq{}_4F_3\bigg[\begin{matrix}\f{5-5p}6&\f{5+p}6&\f{5+p}6&\f{5+p}6\\&1&1&1\end{matrix}\bigg|\ 1\bigg]=0\pmod{p}
$$
since $(p-1)/2<(5p-5)/6$.

If $p\eq5\pmod{6}$, then we have $\<-1/6\>_p=(5p-1)/6>p/2$ and $\<-5/6\>_p=(p-5)/6<p/2$. Thus by \cite[Theorem 2.22]{MP} we arrive at
$$
{}_5F_4\bigg[\begin{matrix}\f16&\f{13}{12}&\f16&\f16&\f16\\&\f1{12}&1&1&1\end{matrix}\bigg|\ 1\bigg]_{p-1}\eq0\pmod{p^2}\quad\t{and}\quad {}_5F_4\bigg[\begin{matrix}\f56&\f{17}{12}&\f56&\f56&\f56\\&\f5{12}&1&1&1\end{matrix}\bigg|\ 1\bigg]_{p-1}\eq0\pmod{p}.
$$
Furthermore, by \eqref{KMidentity} we have
$$
{}_4F_3\bigg[\begin{matrix}\f16&\f16&\f16&\f16\\&1&1&1\end{matrix}\bigg|\ 1\bigg]_{p-1}\eq{}_4F_3\bigg[\begin{matrix}\f{1-5p}6&\f{1+p}6&\f{1+p}6&\f{1+p}6\\&1&1&1\end{matrix}\bigg|\ 1\bigg]=0\pmod{p}
$$
since $(p-5)/2<(5p-1)/6$. Combining the above \eqref{sunconj2_1} holds.

\eqref{sunconj2_2} can be verified in a similar way as the one in the proof of \eqref{sunconj2_1}. We now consider \eqref{sunconj2_4}. Since $p\eq5\pmod{6}$, we have $\<-1/6\>_p=(5p-1)/5>2p/3$ and $\<-5/6\>_p=(p-5)/6<p/3$. Then by \cite[Theorems 2.17 and 2.20]{MP}, we obtain
$$
{}_7F_6\bigg[\begin{matrix}\f16&\f{13}{12}&\f16&\f16&\f16&\f16&\f16\\&\f1{12}&1&1&1&1&1\end{matrix}\bigg|\ 1\bigg]_{p-1}\eq0\pmod{p^2}
$$
and
$$
{}_7F_6\bigg[\begin{matrix}\f56&\f{17}{12}&\f56&\f56&\f56&\f56&\f56\\&\f5{12}&1&1&1&1&1\end{matrix}\bigg|\ 1\bigg]_{p-1}\eq0\pmod{p}.
$$
Similarly to the above, by \eqref{KMidentity} we may easily obtain that
$$
{}_6F_5\bigg[\begin{matrix}\f16&\f16&\f16&\f16&\f16&\f16\\&1&1&1&1&1\end{matrix}\bigg|\ 1\bigg]_{p-1}\eq0\pmod{p}.
$$
By \eqref{sigma1sigma2} we immediately obtain \eqref{sunconj2_4}.

Now the proof of Theorem \ref{sunconj} is complete.
\end{proof}

\end{document}